\def\bb#1\eb{\textcolor{blue}
	{#1}} %
\def\br#1\er{Justificar
	{#1}} %
\newtheorem*{theorem*}{Theorem}
\newtheorem*{corollary*}{Corollary}
\newtheorem*{proposition*}{Proposition}
\newtheorem*{remark*}{Remark}
\def\br#1\er{Justificar{#1}} %
\def\bb#1\eb{\textcolor{blue}{#1}} %
\begin{document}
	
	\title[A lifting principle of curves under exponential-type maps]{A Lifting principle of curves under exponential-type maps}

	\author{Ivan P. Costa e Silva}
	\address{Department of Mathematics, Universidade Federal de Santa Catarina, 88.040-900
		Florianópolis-SC, Brazil.}
	\email{pontual.ivan@gmail.com}

	\author{José L. Flores}
	\address{Departamento de Álgebra, Geometría y Topología, Facultad de Ciencias, Universidad de Málaga, Campus Teatinos, 29071 Málaga, Spain}
	\email{floresj@uma.es}

		\begin{abstract}
		We develop a lifting theory for the exponential map of semi-Riemannian manifolds that overcomes the classical obstruction caused by its singularities. We show that every smooth path in the manifold admits, up to a nondecreasing reparametrization, a partial lift through the exponential map which is inextensible in its domain of definition. If the exponential map satisfies the path-continuation property—a natural topological condition—these lifts extend globally, yielding a general path-lifting theorem.
		
		This lifting approach yields new, alternative proofs of (generalizations of) a number of foundational results in semi-Riemannian geometry: the Hopf–Rinow theorem and Serre's classic theorem about multiplicity of connecting geodesics in the Riemannian case, as well as the Avez–Seifert theorem for globally hyperbolic spacetimes in Lorentzian geometry. More broadly, our results reveal the central role of the continuation property in obtaining geodesic connectivity across a wide range of semi-Riemannian geometries. This offers a unifying geometric principle that is complementary to the more traditional analytic, variational methods used in to investigate geodesic connectedness, and provides new insight into the structure of geodesics, both on geodesically complete and non-complete manifolds.

We also briefly point out how the lifting theory developed here can extend to more general flow-inducing maps on the tangent bundle other than the geodesic flow, suggesting broader geometric applicability beyond the exponential map.
	\end{abstract}

	\newtheorem{thm}{Theorem}[section]
	\newtheorem{prop}[thm]{Proposition}
	\newtheorem{lemma}[thm]{Lemma}
	\newtheorem{cor}[thm]{Corollary}
	\theoremstyle{definition}
	\newtheorem{defi}[thm]{Definition}
	\newtheorem{notation}[thm]{Notation}
	\newtheorem{exe}[thm]{Example}
	\newtheorem{conj}[thm]{Conjecture}
	\newtheorem{prob}[thm]{Problem}
	\newtheorem{rem}[thm]{Remark}
	\newtheorem{conv}[thm]{Convention}
	\newtheorem{crit}[thm]{Criterion}
	\newtheorem{claim}[thm]{Claim}

	\newcommand{\veq}{\mathrel{\rotatebox{90}{$\simeq$}}}
	\newcommand{\ben}{\begin{enumerate}}
		\newcommand{\een}{\end{enumerate}}
	
	\newcommand{\bit}{\begin{itemize}}
		\newcommand{\eit}{\end{itemize}}

	
	\maketitle
	
	\vspace*{-.5cm}
	
	\section{Introduction}\label{section0}

	Map-lifting techniques play a fundamental role across all areas of differential geometry and topology, particularly in the study of smooth maps between manifolds. Such techniques are frequently employed when analyzing the behavior of dynamical systems, investigating path-connectedness, or studying the existence and global properties of solutions to differential equations underlying geometric structures.
	
A particularly relevant instance for our purposes here is the notion of {\em path lifting} and its connection with the so-called {\em path-continuation} property, as investigated by F. Browder and W. Rheinboldt in the 1950s and 1960s \cite{Bro,Re}. Let us briefly recall the essential aspects of this framework. We shall consider throughout smooth maps $\mathcal{F}:N_1\rightarrow N_2$ between smooth (connected) manifolds $N_1$ and $N_2$, and use the term {\em path} to refer to continuous, piecewise smooth curves unless otherwise specified.

	A map $\mathcal{F}:N_1\rightarrow N_2$ is said to have the {\it path-lifting property} (\cite[Def. 2.3]{Re}) if for any path $\alpha:[0,1] \rightarrow N_2$ and any point $x_0 \in \mathcal{F}^{-1}(\alpha(0))$ there exists a path $\overline{\alpha}:[0,1] \rightarrow N_1$ such that 
	$$\overline{\alpha}(0) = x_0\quad \hbox{  and   }\quad \mathcal{F}\circ \overline{\alpha } = \alpha .$$
	A classic result in the theory of covering spaces ensures that any covering map\footnote{By \textit{covering map} here we always mean a {\it smooth covering map}, that is, it is onto and any $y\in N_2$ has an {\it evenly covered} neighborhood $V \ni y$, i.e., $\mathcal{F}^{-1}(V)\subset N_1$ is a disjoint union of open sets restricted to each of which $\mathcal{F}$ is a \textit{diffeomorphism} onto $V$.} possesses the path-lifting property. 
	
	A key contribution of Rheinboldt’s work \cite{Re} is the characterization of the path-lifting property in terms of a purely topological condition known as the path-continuation property \cite[Def. 2.2]{Re}. Specifically, a map 
	$\mathcal{F}:N_!\rightarrow N_2$ has the path-continuation property if, given any path $\alpha:[0,1]\rightarrow N_2$ and any continuous curve $\alpha:[0,b)\rightarrow N_1$, with $0<b\leq 1$, satisfying 
	$$\mathcal{F}\circ \overline{\alpha } = \alpha\mid_{[0,b)},$$
	there exists a sequence $(t_k)_{k\in \mathbb{N}}\subset [0,b)$ converging to $b$, such that the sequence $(\overline{\alpha}(t_k))_{k\in \mathbb{N}}$ converges in $N_1$. 
	This leads to the following simple but fundamental result \cite[Thm. 2.4]{Re}: {\em A local diffeomorphism $\mathcal{F}:N_1\rightarrow N_2$ has the path-lifting property if and only if it has the path-continuation property.} In particular, any smooth covering map $\mathcal{F}:N_1\rightarrow N_2$ satisfies {\em both} the path-lifting {\em and} the path-continuation properties.

	Our work is motivated by questions of geodesic connectedness in semi-Riemannian geometry, with a particular focus on the exponential map $\exp_p:\mathcal{D}\subset T_pM \rightarrow M$ of a semi-Riemannian manifold $(M,g)$.

	While our primary interest lies in semi-Riemannian geometry, the mechanism underlying our approach depends solely on the dynamical structure of flows on the tangent bundle. Indeed, the arguments extend naturally to maps associated with general vector fields on $TM$. For clarity, we formulate the main results in terms of the exponential map, postponing the more abstract formulation to Section \ref{abstractlifting}.
	
	A central challenge is that $\exp_p$ typically possesses singularities, so it may fail to be a local diffeomorphism everywhere. In such cases, path-lifting cannot be inferred from path-continuation alone. Nevertheless, in the Riemannian setting, geodesic connectedness as stated in the Hopf–Rinow theorem ensures surjectivity of the exponential map under geodesic completeness. As shown in \cite[Prop. 2.6]{CF}, this guarantees the path-continuation property even in the presence of singularities.
	
	These observations suggest that, even when singularities occur, exponential-type maps may retain essential features—such as surjectivity or the existence of lifted paths—by virtue of the path-continuation property.
	
	The main contribution of this paper is to show that the difficulties caused by singular points in the exponential map can be overcome. We demonstrate that every path in the manifold can be lifted as far as the domain of the exponential map allows. Furthermore, if the map satisfies the path-continuation property, the lift extends globally within the domain.
	
	The only adjustment required is that, instead of genuine lifts, we consider a \emph{quasi-lift}: a lift of some nondecreasing reparametrization of the original path (see Definition \ref{uh} for details).
	
	Our main result formalizes this principle for the semi-Riemannian exponential map (see Theorem \ref{maint}), although the argument applies more broadly:
	
	\smallskip
	
	\noindent {\bf Main Theorem.} {\em Let $\exp_p:\mathcal{D}\subset T_pM\rightarrow M$ be the exponential map at $p\in M$ for a semi-Riemannian manifold $(M,g)$. Then, any smooth regular curve in $M$ whose initial data lie in the image of $d\exp_p$ admits a partial quasi-lift that is either global or inextensible as a path. Moreover, if $\exp_p$ satisfies the continuation property, the quasi-lift can be chosen to be global.}

	\smallskip
	
	Our proof is quite technical. Therefore, we briefly summarize here the main steps and ideas of our approach.


Let $\mathcal{S}\subset \mathcal{D}(\subset T_p M)$ denote the set of singular points of the exponential map $\exp_p$ of $(M,g)$. Consider an arbitrary curve $\gamma:[0,1]\rightarrow M$, and suppose that it is a geodesic with respect to some auxiliary Riemannian metric $h$ on $M$, with $\gamma(0)=\exp_p(v_0)$ for some $v_0\in \mathcal{D} \setminus \mathcal{S}$. Since $v_0$ is a regular point of the exponential map, the initial velocity $\dot{\gamma}(0)$ uniquely determines a vector $w_0 \in T_{v_0}M$ such that the curve is initialized at $(v_0, w_0) \in T(\mathcal{D} \setminus \mathcal{S})$ via the relation $\dot{\gamma}(0) = (d\exp_p)_{v_0}(w_0)$.
Let $\exp_p^*(h)$ denote the pullback metric defined on $\mathcal{D}\setminus \mathcal{S}$, and let
$\overline{\alpha}:[0,l)\rightarrow \mathcal{D}\setminus\mathcal{S}$
be the maximal geodesic for $\exp_p^*(h)$ with initial conditions $\overline{\alpha}(0)=v_0$ and $\dot{\overline{\alpha}}(0)=w_0$. 

If $l>1$ then the composition $\alpha:=\exp_p\circ\overline{\alpha}$ agrees with $\gamma$ on $[0,1]$, and hence $\overline{\alpha}\mid_{[0,1]}$ provides a lift of $\gamma$.
 The difficulty arises when $\overline{\alpha}$ reaches the singular set $\mathcal{S}$ before $\gamma$ is fully traversed. To address this issue, we proceed as follows.
 
 Let $\mathcal{K} \subset \mathcal{D}$ be a compact neighborhood of $v_0$. 
 For each $\xi > 0$, we consider the non-degenerate metric $\overline{h}_{\xi}$ 
 (see (\ref{eq:Gxi}) and Proposition \ref{la1}) obtained 
 by pulling back the Sasaki metric 
 $h_S$ through a family of immersions $\{\Psi_\xi\}_{\xi>0}$ of $\mathcal{D}$ into the tangent bundle $TM$ that approach the horizontal distribution (defined as the $h_S$-orthogonal complement to the vertical distribution) as 
 $\xi \to 0$.

%
%

 We then consider the geodesic 
 $\overline{\alpha}_{\xi} : [0,1] \rightarrow \mathcal{K} \subset \mathcal{D}$ 
 for the metric $\overline{h}_\xi$, with initial conditions 
 $\overline{\alpha}_{\xi}(0) = v_0$ and $\dot{\overline{\alpha}}_{\xi}(0) = w_0$ 
 (Definition \ref{but}, Proposition \ref{uff}). 
 The resulting curve 
 $\alpha_{\xi} := \exp_p \circ \overline{\alpha}_{\xi}$ 
 need not be an $h$-geodesic and may differ from $\gamma$. 
 However, since the above immersions approach the horizontal distribution as 
 $\xi \to 0$, one expects that $\alpha_{\xi}$ provides an increasingly accurate 
 approximation of $\gamma$ as $\xi \to 0$.

A main difficulty in this construction is that the singularities of $\exp_p$ along conjugate directions lead to a loss of control at the level of first derivatives as $\xi \to 0$. 
In particular, the norm of $\dot{\overline{\alpha}}_\xi$ may blow up as the velocity approaches a singular direction, preventing a direct $C^1$ comparison between the curves $\alpha_\xi$ and the reference geodesic $\gamma$. 
To overcome this issue, 
we show that this effect is compensated at the level of the $L^1$-norm of the velocity: although $|\dot{\overline{\alpha}}_\xi|_{h_p}$ may become large, the set of parameters where this occurs becomes negligible as $\xi \to 0$, yielding a uniform bound on the $h_p$-length of $\overline{\alpha}_\xi$, independent of $\xi$ (Proposition \ref{l3}). As a consequence, despite the absence of derivative control, the curves remain close in the $C^0$ sense after reparametrization.

More precisely, this integral control 
allows us to choose 
a suitable reparametrization of $\overline{\alpha}_{\xi}$ to apply the Arzelà--Ascoli theorem and extract a uniformly convergent subsequence, yielding a limit curve $\overline{\alpha}$. 
Note however that, due precisely to this reparametrization, the limit curve may degenerate on certain subintervals; consequently, $\overline{\alpha}$ need not be a genuine lift of $\gamma$, but rather a partial quasi-lift in our sense.

The limit curve $\overline{\alpha}$ may be only a partial, rather than a global, quasi-lift of $\gamma$, because it may reach the boundary of $\mathcal{K}$ before its image under $\exp_p$ has fully traversed $\gamma$. Thus, the obstruction is not caused by a loss of control of $\exp_p\circ\overline{\alpha}_\xi$, which remains close to $\gamma$, but by the possibility that $\overline{\alpha}$ exits the region where the construction is carried out.

This phenomenon may still occur even when considering an exhausting sequence of compact subsets $\{\mathcal{K}_i\}$ of $\mathcal{D}$. In that case, the corresponding partial quasi-lifts may converge to a curve that is inextensible in $\mathcal{D}$, while its image under $\exp_p$ covers only a proper initial segment of $\gamma$.

To rule out this domain obstruction, we invoke the continuation property. This condition ensures the existence of a compact subset $\mathcal{K}\subset\mathcal{D}$ containing any piecewise smooth curve whose image under $\exp_p$ remains sufficiently controlled (in particular, sufficiently close to $\gamma$; see Proposition~\ref{1.3}). Consequently, the quasi-lift cannot leave the domain before $\exp_p\circ\overline{\alpha}$ has covered the entire curve $\gamma$, and we obtain a global quasi-lift as desired.


The significance of our theorem stems from the plethora of potential geometric applications of quasi-lifts of the exponential map, which naturally encode geodesic connectivity.
In fact, several new geometric results follow as immediate and transparent corollaries of our lifting principle, extending classical theorems that were originally proved using sophisticated variational or topological methods.
We summarize some of these results below.
\begin{itemize}	
	\item Extension of the geodesic-connectedness part of the Hopf–Rinow Theorem to arbitrary semi-Riemannian manifolds satisfying the continuation property (Theorem \ref{ñh}). In the Riemannian case, this yields a new and independent proof of a classic result on connectedness via minimal geodesics (Theorem \ref{tuss}). In the Lorentzian case, this result addresses a problem that goes back to the very origins of Lorentzian Geometry: finding a sharp and geometrically natural condition that ensures geodesic connectedness on Lorentzian manifolds. Indeed, it has long been known that even completeness or compactness are insufficient to guarantee such connectedness, as standard counterexamples show. We identify the continuation property as the sought-after condition, offering a conceptually clean and geometrically intrinsic criterion that fills this long-standing gap.
	
	\item Existence of infinitely many connecting geodesics between two points on any semi-Riemannian manifold under mild assumptions: the continuation property together with the non-properness of the exponential map (Theorem \ref{thr}). In the Riemannian case, these conditions are implied, for instance, by geodesic completeness and non-contractibility of the underlying manifold (Corollary \ref{AA}). This generalizes a classic result by Serre \cite{Se} concerning the existence of infinitely many geodesics connecting two points on a non-contractible Riemannian manifold ---a result originally established via deep variational methods and Morse theory. Our approach, by contrast, obviates the need for such analytic techniques and relies instead on a purely geometric lifting principle grounded in the path-continuation property.
	
	\item Extension of the Avez–Seifert theorem in Lorentzian geometry: the standard global hyperbolicity assumption is replaced by the weaker {\em causal continuation property} (Definition \ref{def1}, Theorem \ref{thmclave3}). This substantially broadens the scope of that foundational result in Lorentzian geometry, which among other perks has found broad applications in general relativity, where the existence of such geodesics carries a well-established physical interpretation as light rays and freely-falling particles with mass, playing a crucial role for examples in most of the all-important singularity theorems.
\end{itemize}

	Last but not least, our result is crucial to overcome the challenges posed by the e\-xis\-ten\-ce of the so-called {\em self-conjugate points} when addressing the notion of {\em geodesic homotopy} — a notion first introduced in \cite{CFH2} as a tool for finding closed geodesics. In fact, as de\-mons\-tra\-ted in \cite{CF}, geodesic homotopy can, with the results established here, be applied in full generality, without the need to exclude self-conjugate points by hand. We are thus able to either produce new results on closed geodesics or enhance existing ones (see \cite{CF} for further details).
	
%
%

The remainder of this paper is structured as follows. In Section \ref{section1}, we introduce some key definitions and present the main theorem. Section \ref{dos} is devoted to yielding several preliminary results that form the backbone of the subsequent analysis. The proof of the main theorem is given in Section \ref{tres}. To illustrate the scope and significance of this result, Section \ref{s5} derives a series of novel and immediate consequences associated with the problem of geodesic connectedness on semi-Riemannian manifolds, with particular emphasis on the Riemannian and Lorentzian cases discussed earlier. Finally, in Section \ref{abstractlifting}, we show that the quasi-lifting framework developed here extends well beyond the exponential map, and applies to the natural maps associated with arbitrary vector fields on $TM$.
		
	\section{Definitions and statement of the main result}\label{section1}
	
	Henceforth, all smooth manifolds throughout the paper are assumed to be connected.
	
	\begin{defi}\label{uh} Let $\mathcal{F}:N_1\rightarrow N_2$ denote a smooth map. 
A continuous curve $\sigma:[0,c]\rightarrow N_1$ with $0< c<\infty$ 
		is a {\em partial quasi-lift (through $\mathcal{F}$)}  of the piecewise smooth regular curve $\gamma:[0,1]\rightarrow N_2$ 
		if  there exists 
		a continuous, nondecreasing, surjective function $\chi:[0,c]\rightarrow [0,d]$
		so that $\gamma \circ\chi= \mathcal{F}\circ\sigma$ for some $0<d\leq 1$. If $d=1$ we simply say that $\sigma$ is a {\em (global) quasi-lift}. 
	\end{defi}	
	
	The reparametrizing function $\chi:[0,c]\rightarrow [0, d]$ in the previous definition may be constant on certain subintervals. In such cases, it is not possible to obtain an \emph{ordinary lift}---i.e., a quasi-lift with $\chi(t) \equiv t$ for all $t \in [0,1]$ and $c = d = 1$---simply by reparametrizing conveniently $\sigma$. 
This obstruction reflects the geometric limitations imposed by the presence of singularities in the lifting map: although the image of the path can still be recovered through a quasi-lift, the parameterization cannot always be preserved. What can be achieved 
is that $\chi$ becomes an \emph{Aztec step function}---that is, a piecewise smooth function $\chi : [0, c] \rightarrow [0,1]$, $c\geq 1$, whose derivative alternates between $0$ and $1$. This structure captures the essential feature of quasi-lifts: the lifting curve progresses along the base path precisely when allowed by the geometry, and pauses otherwise. To this end, let $\chi:[0,c]\to[0,d]$ be a continuous, non-decreasing function. 
One can construct a continuous function $\zeta:[0,\tilde{c}]\to[0,c]$, strictly increasing on the subintervals where $\chi$ is strictly increasing, such that the function $\tilde{\chi} := \chi \circ \zeta : [0,\tilde{c}] \longrightarrow [0,d]$ satisfies $\frac{d}{dx} \tilde{\chi}(x) = 1$ on those subintervals. The construction proceeds as follows: consider the quantile function associated with $\chi$, $\zeta(y) := \inf \{ x\in[0,c] : \chi(x) \ge y \}$, $y\in[0,d]$, and adjust the domain linearly on the intervals where $\chi$ is constant to obtain a parametrization that is continuous and strictly increasing on the non-constant segments. 
By construction, $\zeta$ is continuous, and $\tilde{\chi}$ has unit derivative on the intervals where $\chi$ is not constant (for further details, 
see for instance, \cite[Chapter 7]{Ru}).


	\begin{defi}\label{wpdeff} 
		Let $\mathcal{F}:N_1\rightarrow N_2$ be a smooth map between smooth manifolds. We say that $\mathcal{F}$ has the 
		{\em continuation property} if for any piecewise smooth 
		curve $\gamma:[0,1]\rightarrow N_2$ and any continuous curve $\sigma:[0,b)\subset [0,1]\rightarrow N_1$ such that $\mathcal{F}\circ\sigma=\gamma\mid_{[0,b)}$, there exists a sequence $(t_k)_{k\in \mathbb{N}}$ in $[0,b)$ converging to $b$ for which the sequence $\{\sigma(t_k)\}_{k\in\mathbb{N}}$ converges on $N_1$.
	\end{defi}	
	Due to the key role played by the continuation property in this work, we establish here a couple of alternative characterizations of this notion. 	
	\begin{prop}\label{1.3} Let $\mathcal{F}:N_1\rightarrow N_2$ be a smooth map. The following statements are equivalent.
		\begin{itemize}
			\item[(i)] $\mathcal{F}$ has the continuation property.
			\item[(ii)] $\mathcal{F}$ is {\em weakly proper}, i.e.  
		any 
		continuous curve $\sigma:[a,b)\rightarrow N_1$ with $-\infty< a<b\leq 1$ 
		such that $\mathcal{F}\circ \sigma$ is right-extendible in $N_2$ has image contained inside a compact set of $N_1$. (This notion was first introduced in \cite{CF}.)\footnote{Recall that a smooth map $\mathcal{F}:N_1\rightarrow N_2$ between smooth manifolds is called {\em proper} if the preimage by $\mathcal{F}$ of any compact set in $N_2$ is compact in $N_1$. So, just as the name suggests, any proper map is weakly proper.}
		\item[(iii)] Given $q\in N_1$ and $\ell>0$, there exists a compact neighborhood $\mathcal{C}$ of $q$ in $N_1$ such that any 
			continuous curve $\sigma:[a,b)\rightarrow N_1$ with $\sigma(a)=q$ for which ${\rm length}_h(\mathcal{F}\circ \sigma)\leq \ell$ has its image contained in $\mathcal{C}$, where $h$ is any auxiliary complete Riemannian metric on $N_2$.
	\end{itemize}	
\end{prop}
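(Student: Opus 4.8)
The plan is to prove the cyclic chain of implications (iii) $\Rightarrow$ (ii) $\Rightarrow$ (i) $\Rightarrow$ (iii), organized so that only the final link carries real content. The first two amount to unwinding the definitions, once one notes that a right-extendible curve in $N_2$ --- i.e.\ one admitting an extension to a path, hence rectifiable --- has finite $h$-length. Indeed, for (iii) $\Rightarrow$ (ii), given a continuous $\sigma:[a,b)\to N_1$ with $\mathcal{F}\circ\sigma$ right-extendible, put $\mathcal{L}:=\mathrm{length}_h(\mathcal{F}\circ\sigma)<\infty$ and $q:=\sigma(a)$; the compact neighborhood $\mathcal{C}$ furnished by (iii) contains $\sigma([a,b))$, so this image is relatively compact, which is (ii). For (ii) $\Rightarrow$ (i), given a path $\gamma:[0,1]\to N_2$ and a continuous lift $\sigma:[0,b)\to N_1$ with $\mathcal{F}\circ\sigma=\gamma|_{[0,b)}$, the curve $\mathcal{F}\circ\sigma$ is right-extendible (by $\gamma$ itself), so weak properness confines $\sigma([0,b))$ to a compact $K\subset N_1$; then for any $t_k\to b$ the sequence $\{\sigma(t_k)\}\subset K$ has a subsequence converging in $N_1$, which is precisely the continuation property.

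The substantive implication is (i) $\Rightarrow$ (iii), which I would prove by contraposition. Assuming (iii) fails, there are $q\in N_1$ and $\mathcal{L}>0$ such that, fixing an exhaustion $\mathcal{C}_1\subset\mathcal{C}_2\subset\cdots$ of $N_1$ by compact sets with $\mathcal{C}_n\subset\mathrm{int}\,\mathcal{C}_{n+1}$, one obtains continuous curves $\sigma_n:[0,1]\to N_1$ with $\sigma_n(0)=q$, $\mathrm{length}_h(\mathcal{F}\circ\sigma_n)\le\mathcal{L}$ and some point of $\sigma_n$ outside $\mathcal{C}_n$; truncating $\sigma_n$ at such a point, we may take its endpoint $x_n$ to lie outside $\mathcal{C}_n$, hence outside $\mathcal{C}_{n-1}$, whence $x_n\to\infty$ in $N_1$. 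Since $h$ is complete, every $\mathcal{F}\circ\sigma_n$ lies in the (compact) closed $h$-ball of radius $\mathcal{L}$ about $\mathcal{F}(q)$; reparametrizing $\mathcal{F}\circ\sigma_n$ by $h$-arclength and extending by constants to a common interval makes the family uniformly Lipschitz, so Arzel\`a--Ascoli yields a subsequence with $\mathcal{F}\circ\sigma_{n_k}\to\gamma$ uniformly, $\gamma$ a Lipschitz curve issuing from $\mathcal{F}(q)$ that, after a standard smoothing, may be taken to be a path. Transporting these reparametrizations to the $\sigma_{n_k}$ themselves, I would extract from them --- by a diagonal argument keyed to the first exit times from the $\mathcal{C}_m$ --- a limit which is a continuous curve on a half-open interval, which lifts a portion of $\gamma$ through $\mathcal{F}$ (by uniform convergence on that portion) and whose image eventually leaves every compact subset of $N_1$. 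This is a properly divergent lift of a path, contradicting (i); hence (iii) holds.

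I expect this last extraction to be the main obstacle. The curves $\sigma_n$ are only continuous, hence not equicontinuous, so one cannot apply Arzel\`a--Ascoli to them directly; the difficulty is to produce, in the limit, a genuine $\mathcal{F}$-lift of the limit path $\gamma$ that neither collapses to a point nor stays in a compact set but genuinely escapes to infinity. I would handle this by combining the exit-time bookkeeping above with lower semicontinuity of $h$-length under uniform convergence --- and, if needed, by choosing each $\sigma_n$ so that $\mathrm{length}_h(\mathcal{F}\circ\sigma_n)$ is near-minimal among curves from $q$ whose image escapes $\mathcal{C}_n$, which forces the limit to be nondegenerate --- thereby pinning down on which subinterval the reparametrized $\sigma_{n_k}$ converge and where they run off. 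A secondary, routine point is to fix the regularity conventions cleanly (``path'' $=$ continuous and piecewise smooth, versus merely continuous curve; and ``right-extendible'' $=$ finite $h$-length together with an endpoint limit), which makes the two easy implications fully rigorous.
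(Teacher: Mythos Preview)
Your treatment of the easy implications (iii) $\Rightarrow$ (ii) $\Rightarrow$ (i) is fine and matches the paper, which dispatches (iii) $\Rightarrow$ (ii) as ``straightforward'' and quotes (i) $\Leftrightarrow$ (ii) from \cite{CFH}. The substantive step in the paper is (ii) $\Rightarrow$ (iii), and here your plan diverges in a way that leaves a real gap.

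You correctly identify the problem: from the failure of (iii) you get curves $\sigma_n$ starting at $q$ with $\mathrm{length}_h(\mathcal{F}\circ\sigma_n)\le\mathcal{L}$ that escape every compact set of $N_1$. But your proposed remedy---Arzel\`a--Ascoli on $\mathcal{F}\circ\sigma_n$ followed by a ``diagonal extraction'' of a limit of the $\sigma_n$ themselves---does not work, and the hedges you add (near-minimality of the $\mathcal{F}$-length, lower semicontinuity) do not rescue it. You have no metric control on the $\sigma_n$ in $N_1$: they are merely continuous, and $\mathcal{F}$ may collapse wildly different curves in $N_1$ onto nearby curves in $N_2$. There is simply no mechanism forcing the $\sigma_n$ to have any limit in $N_1$, let alone one that is an $\mathcal{F}$-lift of your limit path $\gamma$. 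Near-minimal $\mathcal{F}$-length constrains the image in $N_2$, not the curve in $N_1$.

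The paper avoids this entirely by a \emph{concatenation} argument rather than a limit argument. Starting from the same family $\{\sigma_m\}$, one records for each $m$ intermediate points $\sigma_m(b_j)\in\mathcal{C}_j\setminus\mathcal{C}_{j-1}$ for $1\le j\le m$. Since these lie in the fixed compact $\mathcal{C}_j$, a diagonal subsequence makes, for each $j$, the $\mathcal{F}$-images of $\sigma_n(b_j)$ cluster, so one can choose short connecting arcs $\tau^j$ in $N_1$ with $\mathrm{length}_h(\mathcal{F}\circ\tau^j)<2^{-j}$. One then splices together segments $\sigma_{n_i}|_{[b_i,b_{i+1}]}$ (chosen with near-infimal $\mathcal{F}$-length $l_i$) and the connectors $\tau^i$ into a \emph{single} piecewise smooth curve $\alpha$ in $N_1$. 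By construction $\alpha$ visits $\sigma_{n_i}(b_i)\notin\mathcal{C}_{i-1}$ for every $i$, hence is not precompact in $N_1$, while $\mathrm{length}_h(\mathcal{F}\circ\alpha)\le\sum l_i+\sum 2^{1-i}\le\mathcal{L}+2<\infty$. This directly contradicts weak properness (ii). The point is that no convergence of the $\sigma_n$ in $N_1$ is ever needed: one manufactures the offending curve by gluing finitely many pieces at each stage.
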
	

\noindent {\it Proof.} The proof of the equivalence (i)$\Leftrightarrow$(ii) is given infor 
\cite[Proposition 4.3]{CFH}\footnote{The result \cite[Proposition 4.3]{CFH} actually refers to the particular case $\mathcal{F}=\exp_p$, $N_1=\mathcal{D}\subset T_p M$ and $N_2=M$, but the same proof is valid in the general case.}. The implication (iii)$\Rightarrow$(ii) is straightforward. Accordingly, we shall focus only on the implication (ii)$\Rightarrow$(iii), to consider which we fix a background complete Riemannian metric $h$ on $N_2$, a point $q\in N_1$ and a number $\ell>0$.

Assume, by contradiction, that $(ii)$ holds, together with the existence of continuous curves $\sigma_m:[a,b]\rightarrow N_1$ with $\sigma_m(a)=q$, 
and a nondecreasing sequence of numbers $\{b_j\}\subset [a,b]$ such that 
\begin{equation}\label{aa1}
	{\rm length}_h(\mathcal{F}\circ\sigma_{m})\leq \ell\qquad\hbox{and}\qquad \sigma_m(b_j)\in \mathcal{C}_j\setminus \mathcal{C}_{j-1}\quad \forall\hbox{$1\leq j\leq m$,}
\end{equation}
where $\{\mathcal{C}_j\}$ is a sequence of compact neighborhoods of $q$ with $\mathcal{C}_{j-1}\subset \mathring{\mathcal{C}}_j$ for all $j$ and $\cup_{j=1}^{\infty}\mathcal{C}_j=N_1$. 
We can assume without loss of generality that all $\sigma_m$ are piecewise smooth. 


Let $j=1$ and consider the sequence $\{\sigma_m(b_1)\}\subset \mathcal{C}_1$. Since $\mathcal{F}$ is smooth and $\mathcal{C}_1$ is compact, this sequence admits a subsequence  $\{\sigma_{m^1_k}(b_1)\}$, $m^1_k\geq 2$, and a sequence of smooth curves $\{\tau^1_{kk'}\}$, $k<k'$, connecting $\sigma_{m^1_k}(b_1)$ with $\sigma_{m^1_{k'}}(b_1)$ such that
\[
{\rm length}_h(\mathcal{F}\circ\tau^1_{kk'})<\frac{1}{2}\quad\forall k<k'.
\]
For $i=2$ the sequence $\{\sigma_{m^1_k}(b_2)\}\subset \mathcal{C}_2$ admits a subsequence  $\{\sigma_{m^2_k}(b_2)\}$, $m^2_k\geq 3$, and a sequence of smooth curves $\{\tau^2_{kk'}\}$, $k<k'$, connecting $\sigma_{m^2_k}(b_2)$ with $\sigma_{m^2_{k'}}(b_2)$ such that
\[
{\rm length}_h(\mathcal{F}\circ\tau^2_{kk'})<\frac{1}{2^2}\quad\forall k<k'.
\]
Proceeding in this way by induction, we construct sequences $\{m^1_k\}\supset\{m^2_k\}\supset\cdots\supset\{m^j_k\}\supset\cdots$, with $m^j_k\geq j+1$, and sequences of curves $\{\tau^j_{kk'}\}$, $k<k'$, connecting $\sigma_{m^j_k}(b_j)$ with $\sigma_{m^j_{k'}}(b_j)$, such that 
\[
{\rm length}_h(\mathcal{F}\circ\tau^j_{kk'})<\frac{1}{2^j}\quad\forall k<k'.
\]
Finally, replace the original sequence $\{\sigma_m\}$ by the diagonal subsequence $\{\sigma_n:=\sigma_{m^n_n}\}$ (and consequently, the original sequence $\{b_j\}$ by $\{b_i:=b_{j^i_i}\}$) in order to ensure that it satisifies, in addition to (\ref{aa1}), the condition
\begin{equation}\label{aa2}
	{\rm length}_h(\mathcal{F}\circ\tau^i_{nn'})<\frac{1}{2^i}\quad\forall i\leq n<n'.
\end{equation}
for smooth curves $\tau^i_{nn'}$ connecting $\sigma_{n}(b_i)$ with $\sigma_{n'}(b_i)$.

Next, define for each $i$, 
\[
l_i:={\rm inf}\{{\rm length}_h((\mathcal{F}\circ\sigma_n)\mid_{[b_i,b_{i+1}]}):\; n\geq i+1\},
\]
and choose some $n_i\geq i+1$ such that 
\begin{equation}\label{aa3}
	{\rm length}_h((\mathcal{F}\circ\sigma_{n_i})\mid_{[b_i,b_{i+1}]})<l_i+\frac{1}{2^i}.
\end{equation}
Define $\alpha_{i}:=\sigma_{n_i}\mid_{[b_{i},b_{i+1}]}$ and denote by $\alpha_{i\,i+1}$ the curve $\tau^{i+1}_{n_i n_{i+1}}$ which connects $\sigma_{n_i}(b_{i+1})$ with $\sigma_{n_{i+1}}(b_{i+1})$.   
Finally, consider the piecewise smooth curve $\alpha$ in $N_1$ obtained by making the following countably infinite concatenation:
\[
\alpha:=\alpha_1 * \alpha_{12} * \alpha_{2} * \alpha_{23} * \alpha_{3} \cdots
\] 
By construction, $\alpha$ passes through $\sigma_{n_1}(0)=q$ and $\sigma_{n_i}(b_i)\not\in \mathcal{C}_{i-1}$ for all $i$ (recall (\ref{aa1})). So, $\alpha$ is not contained in any compact set of $N_1$. On the other hand, from (\ref{aa2}) and (\ref{aa3}),
\[
\begin{array}{rl}
	{\rm length}_h(\mathcal{F}\circ\alpha) & ={\rm length}_h(\mathcal{F}\circ\alpha_1)+{\rm length}_h(\mathcal{F}\circ\alpha_{12})+{\rm length}_h(\mathcal{F}\circ\alpha_{2})
	+\cdots \\ & <(l_1+\frac{1}{2^1})+\frac{1}{2^2}+(l_2+\frac{1}{2^2})+\frac{1}{2^3}+\cdots \\ & < \sum_{i=1}^\infty l_i+2\sum_{i=1}^{\infty}\frac{1}{2^i},
\end{array}
\] 
where, by definition of $l_i$, we have $\sum_{i=1}^{k-1} l_i\leq {\rm length}_h((\mathcal{F}\circ\sigma_k)\mid_{[0,b_k]})\leq \ell$ for all $k$ (recall (\ref{aa1})). Therefore,
\[
{\rm length}_h(\mathcal{F}\circ\alpha)<\sum_{i=1}^\infty l_i+2\sum_{i=1}^{\infty}\frac{1}{2^i}\leq \ell+2\cdot 1<\infty,
\] 
in contradiction with the weak properness of $\mathcal{F}$. \qed

\smallskip

As the inclusion map $i:\mathbb{R}\rightarrow \mathbb{R}^2$, $i(x)=(x,0)$ shows, the continuation property does not guarantee in general the existence of a quasi-lift for any smooth path.
In order to explore a context where this implication is nevertheless satisfied, we will restrict our attention to exponential-type maps.

We are now ready to state the main result of this note.

\begin{thm}\label{maint} 
	Let $\exp_p:\mathcal{D}\subset T_pM\rightarrow M$ be the exponential map at a point $p$ of a semi-Riemannian manifold $(M,g)$, with $\mathcal{D}$ its maximal domain. Then, any smooth regular map $\gamma:[0,b]\rightarrow M$ with $(d\exp_p)(v_0,w_0)=(\gamma(0),\dot{\gamma}(0))$ for some $(v_0,w_0)\in T\mathcal{D}$ admits a partial quasi-lift with initial data $(v_0,w_0)$ which is either global or inextensible as a path. Moreover, if $\exp_p$ satisfies the continuation property, the quasi-lift can be chosen to be global.
\end{thm}

\begin{rem}
	Under the continuation property, the assumption on the initial data
	is automatically satisfied for any smooth regular curve.
	Indeed, given a path $\gamma$, one may extend it backwards so that it starts at $p$
	(recall that $(d\exp_p)_0$ is non-singular), apply the theorem to this extended curve,
	and then restrict the resulting quasi-lift to the original interval.
\end{rem}

\section{Preliminary technical results}\label{dos}

	Throughout this section, the symbols $C_{\mathcal{K}}$ and $\xi_{\mathcal K}$ denote positive constants whose values may change from line to line. When necessary, they are redefined (for instance, by taking maxima or minima) in order to satisfy the required properties while preserving the previous ones.

Let $(M,g)$ be a semi-Riemannian manifold, $p\in M$, and denote by
\[
\exp_p : \mathcal{D} \subset T_p M \longrightarrow M
\]
the exponential map at $p\in M$, with $\mathcal{D}$ its maximal domain. Let $h$ be a Riemannian metric on $M$, and let $h_S$ denote the Sasaki metric on $TM$ induced by $h$. For $\xi>0$, define the smooth map
\[
\Psi_\xi : \mathcal{D} \longrightarrow TM, 
\qquad 
\Psi_\xi(v) := \big(\exp_p(v),(d\exp_p)_v(\xi v)\big),
\]
and set
\begin{equation}\label{eq:Gxi}
	\overline{h}_\xi := \Psi_\xi^*(h_S).
\end{equation}

\begin{prop}\label{la1}
	For each $\xi>0$, the map $\Psi_\xi$ is an immersion. In particular,
	$\overline{h}_\xi$ is a Riemannian metric on $\mathcal{D}$.
\end{prop}

\begin{proof}
	Given $v\in \mathcal{D}$, 
	$w\in T_v(T_pM)\setminus\{0\}$, let \(v(s)\subset T_pM\) be a smooth curve with
	\[
	v(0)=v,
	\qquad
	v'(0)=w.
	\]
	This curve induces a geodesic variation
	\[
	\Gamma(s,t)=\exp_p\big(t\,v(s)\big).
	\]
	The associated variational field
	\[
	J(t)=\left.\frac{\partial}{\partial s}\right|_{s=0}\Gamma(s,t)
	\]
	is a Jacobi field along the geodesic \(\gamma(t)=\Gamma(0,t)=\exp_p(tv)\) satisfying
	\[
	J(0)=0,
	\qquad
	J'(0)=w\neq 0.
	\]
	By definition,
	\[
	(d\Psi_\xi)_v(w)
	=
	\left.\frac{D}{ds}\right|_{s=0}
	\Psi_\xi(v(s)).
	\]
	Writing explicitly,
	\[
	\Psi_\xi(v(s))
	=
	\Big(
	\exp_p(v(s)),
	(d\exp_p)_{v(s)}\big(\xi v(s)\big)
	\Big),
	\]
	and differentiating with respect to $s$, we obtain
	\[
	(d\Psi_\xi)_v(w)
	=
	\Big(
	J(1),
	\xi\, J'(1)
	\Big)
	\in T_{\Psi_\xi(v)}(TM),
	\]
	where we have used the standard identification given by the horizontal--vertical
	decomposition of $T(TM)$.

If $(d\Psi_\xi)_v(w)=0$, then $J(1)=0$ and $J'(1)=0$. 
By uniqueness of Jacobi fields, it follows that $J\equiv 0$. 
Hence $w=J'(0)=0$, a contradiction.
Therefore $(d\Psi_\xi)_v(w)\neq 0$ for every 
	$w\neq 0$, and $\Psi_\xi$ is an immersion for every $\xi>0$.
	
	Finally, since $h_S$ is a Riemannian metric on $TM$ and $\Psi_\xi$ is an immersion, 
	its pullback $\overline{h}_\xi = \Psi_\xi^*(h_S)$
	is a positive definite symmetric $(0,2)$–tensor on $\mathcal{D}$. 
	Thus, $\overline{h}_\xi$ is a Riemannian metric, as required.
\end{proof}

Observe that when $\xi=0$ the map $\Psi_\xi$ formally reduces to $v \mapsto (\exp_p(v),0)$,
so that the tensor $\overline h_\xi=\Psi_\xi^*(h_S)$ can be viewed as a
regularized version of the pullback tensor $(\exp_p)^*h$ on $\mathcal D$.
The latter may become degenerate at points where the 
$d\exp_p$ loses rank, that is, at conjugate directions. The additional
velocity component in the definition of $\Psi_\xi$ restores positive
definiteness while preserving the geometry of the geodesic flow.

The following definition introduces a key notion for constructing path-lifting via a convergence procedure.	

\begin{defi}\label{but}
Let $\gamma:[0,b]\to M$ be an $h$-geodesic, and fix some $\xi>0$.
A smooth curve $\overline{\alpha}_\xi:I=[0,l]\to \mathcal{D}$, $0<l\le b$, is called a $\xi$-{\em regularized lift (through $\exp_p$)} of $\gamma$ if it is a $\overline{h}_\xi$-geodesic satisfying $\exp_p(\overline{\alpha}_\xi(0))=\gamma(0)$ and
$(d\exp_p)_{\overline{\alpha}_\xi(0)}\big(\dot{\overline{\alpha}}_\xi(0)\big)=\dot{\gamma}(0)$. 
\end{defi}

Let $\overline{\alpha}_\xi$ be a $\xi$-regularized lift contained in a compact subset $\mathcal{K}\subset \mathcal{D}$, with initial data $\overline{\alpha}_\xi(0)=v_0$ and $\dot{\overline{\alpha}}_\xi(0)=w_0$, and set $\alpha_\xi := \exp_p \circ \overline{\alpha}_\xi$. Then $\alpha_\xi$ is not, in general, an $h$--geodesic. Indeed, by construction,
\[
\overline h_\xi = \Psi_\xi^*(h_S),
\qquad
\Psi_\xi(v) = \big(\exp_p(v),(d\exp_p)_v(\xi v)\big),
\]
so that $\Psi_\xi\circ\overline{\alpha}_\xi$ is a geodesic of the Sasaki metric $h_S$ on $TM$. Consequently, its horizontal component $\alpha_\xi$ satisfies
\begin{equation}\label{rru}
	\frac{D^h}{ds}\dot{\alpha}_\xi
	=
	R(\dot{\alpha}_\xi,V)\dot{\alpha}_\xi,
\end{equation}
where $R$ is the curvature tensor of $h$, and 
\[
V(s)=(d\exp_p)_{\overline{\alpha}_\xi(s)}(\xi \overline{\alpha}_\xi(s)).
\]
Observe that the velocity of $\Psi_\xi\circ\overline{\alpha}_\xi$ splits as
\[
\frac{d}{ds}\Psi_\xi(\overline{\alpha}_\xi(s))
=
\left(\dot{\alpha}_\xi(s),\, \xi\,\frac{D^h}{ds}(d\exp_p)_{\overline{\alpha}_\xi(s)}(\overline{\alpha}_\xi(s))\right).
\]
So, taking into account that $\Psi_\xi\circ\overline{\alpha}_\xi$ is a geodesic of $(TM,h_S)$, its speed is constant:
\[
\left|\frac{d}{ds}\Psi_\xi(\overline{\alpha}_\xi(s))\right|_{h_S}
=
\left|\frac{d}{ds}\Psi_\xi(\overline{\alpha}_\xi(0))\right|_{h_S}.
\]
Therefore,
\[
\begin{array}{rl}
|\dot{\alpha}_\xi(s)|_h^2 & \leq
|\dot{\alpha}_\xi(s)|_h^2
+
\left|\xi\,\frac{D^h}{ds}(d\exp_p)_{\overline{\alpha}_\xi(s)}(\overline{\alpha}_\xi(s))\right|_h^2=\left|\frac{d}{ds}\Psi_\xi(\overline{\alpha}_\xi(s))\right|_{h_S}^2  \\ & =\left|\frac{d}{ds}\Psi_\xi(\overline{\alpha}_\xi(0))\right|_{h_S}^2 =|\dot{\alpha}_\xi(0)|_h^2
+
\left|\xi\,\frac{D^h}{ds}(d\exp_p)_{\overline{\alpha}_\xi(0)}(\overline{\alpha}_\xi(0))\right|_h^2.
\end{array}
\]
Since $\dot{\alpha}_\xi(0)=\dot{\gamma}(0)$, $\overline{\alpha}_\xi(0)=v_0$ and $\dot{\overline{\alpha}}_\xi(0)=w_0$, the last term in the previous formula is bounded independently of $\xi$. Consequently,
\begin{equation}\label{cinco}
	|\dot{\alpha}_\xi(s)|_h \quad \text{is bounded independently of } \xi>0.
\end{equation}
On the other hand, since $\overline{\alpha}_\xi([0,l])\subset \mathcal{K}$ and $\mathcal{K}$ is compact, the differential of $\exp_p$ is uniformly bounded on $\mathcal{K}$, and thus,
\begin{equation}\label{vis}
|V(s)|_h \le C'\,\xi\,|\dot{\overline{\alpha}}_\xi(s)|_{h_p},\quad \hbox{for some constant $C'>0$.}
\end{equation}
Putting together \eqref{rru}, (\ref{cinco}) and (\ref{vis}), we can conclude as follows:
\begin{prop}\label{eq:alpha-acc'} Let $\mathcal{K}\subset \mathcal{D}$ be a compact set with $(d\exp_p)(v_0,w_0)=(\gamma(0),\dot{\gamma}(0))$  for some $(v_0,w_0)\in T\mathring{\mathcal{K}}$. Then, there exists some constant $C>0$ independent of $\xi$ such that for any $\xi$-regularized lift $\overline{\alpha}_\xi$ contained in $\mathcal{K}$, with initial data $\overline{\alpha}_\xi(0)=v_0$ and $\dot{\overline{\alpha}}_\xi(0)=w_0$, the following inequality holds:
\begin{equation}\label{eq:alpha-acc'}
	\left(\frac{d}{ds}|\dot{\alpha}_\xi(s)|_h
	\le\right)
	\left|\frac{D^h}{ds}\dot{\alpha}_\xi(s)\right|_{h}\le 
	C\,\xi\,|\dot{\overline{\alpha}}_\xi(s)|_{h_p}\quad\hbox{for any $s\in [0,l]$.}
\end{equation}
\end{prop}


\medskip	
	
	We now establish a natural existence result for $\xi$-regularized lifts, showing that such lifts can always be constructed under reasonable assumptions. 
	
	\begin{prop}\label{uff} Let $\mathcal{K}\subset \mathcal{D}$ be a compact set, and let  $\gamma:[0,b]\rightarrow M$ be an $h$-geodesic with $(d\exp_p)(v_0,w_0)= (\gamma(0),\dot{\gamma}(0))$ for some $(v_0,w_0)\in T\mathring{\mathcal{K}}$.
		Then, for any $\xi>0$ there exists a $\xi$-regularized lift $\overline{\alpha}_\xi:[0,l]\rightarrow\mathcal{K}$ of $\gamma$ with $\overline{\alpha}_\xi(0)=v_0$, $\dot{\overline{\alpha}}_\xi(0)=w_0$ and 
		$0<l\leq b$. 
		Moreover, if $l<b$ then ${\rm Im}\,\overline{\alpha}_\xi\not\subset \mathring{\mathcal{K}}$.
	\end{prop}

	\begin{proof} 
		Let $\hat{\alpha}_\xi:[0,\hat{l})\rightarrow \mathcal{D}$ be the maximal $\overline{h}_\xi$-geodesic 
	in $\mathcal{D}$ 
	with initial conditions $\hat{\alpha}_\xi(0)=v_0$, $\dot{\hat{\alpha}}_\xi(0)=w_0$. 
	Then, either $\hat{l}>b$ or $\hat{l}\leq b$ and some restriction
	$\hat{\alpha}_\xi\mid_{[0,l)}$, $0<l<\hat{l}$, is also right-inextendible in $\mathring{\mathcal{K}}$. In both cases
	we obtain the required $\xi$-regularized lift $\overline{\alpha}_\xi$ just by conveniently restricting $\hat{\alpha}_\xi$. 
	\end{proof}

The next result will be crucial to ensure, together with the Ascoli-Arzelá theorem, the existence of a partial limit for a sequence of $\xi_n$-regularized lifts of $\gamma$ as $\xi_n\rightarrow 0$. But, first, we need a technical lemma.

\begin{lemma}\label{claim}
	Let $\overline\alpha_\xi$ be a geodesic of $(\mathcal D,\overline h_\xi)$ defined on $[0,l]$,
	and set
	\[
	a(s):=\overline{h}_{\xi}(\overline{\alpha}_\xi(s),\dot{\overline{\alpha}}_\xi(s))\quad\hbox{for all $s\in [0,l]$.}
	\]
	Then, there exist $\varepsilon_0, C_0, \xi_0>0$ 
	such that, for any $0<\xi\leq\xi_0$, 
	\[
		|\dot{a}(s)|
			\geq \varepsilon_0|\dot{\overline{\alpha}}_\xi(s)|_{h_p} \quad\hbox{on $I_{C_0}:=\{s\in [0,l]:\, |\dot{\overline{\alpha}}_\xi(s)|_{h_p}>C_0\}$.}
	\]
\end{lemma}

\begin{proof} 
	Denote by $\bar{J}_{\xi}(t)\equiv \bar{J}_{\xi}(t,s)$, 
	$s\in [0,l]$, the vector field on the curve $t\in [0,1]\mapsto t\,\overline{\alpha}_{\xi}(s)$ given by the variation $\overline{\Gamma}(t,s):=t\,\overline{\alpha}_{\xi}(s)$, i.e.
	\[
	\bar{J}_{\xi}(t):=\frac{d}{ds}\overline{\Gamma}(t,s)=t\dot{\overline{\alpha}}_{\xi}(s)\quad\hbox{on $[0,l]$}.
	\]
	Clearly,
	\[
	\bar{J}_{\xi}(0)=0,\qquad
	\bar{J}_{\xi}(1)=\dot{\overline{\alpha}}_{\xi}(s)
	\quad\hbox{on $[0,l]$}. 
	\]
	
	\medskip
	
	\noindent {\sc Claim.} There exist constants $\overline{\varepsilon}_0,\delta_0>0$ such that, for any $\xi>0$ sufficiently small, 
	\[
	0<|\bar{J}_{\xi}(1)|_{\overline h_\xi}
	<\delta_0|\dot{\overline\alpha}_{\xi}(s)|_{h_p}
	\quad\Longrightarrow\quad
	\left|\frac{d}{dt}\Big|_{t=1}|\bar{J}_{\xi}(t)|_{\overline h_\xi}\right|
	\ge \overline{\varepsilon}_0 |\dot{\overline\alpha}_{\xi}(s)|_{h_p}.
	\]
	
	\medskip

	\medskip
	
	\noindent {\em Proof of Claim.}
	Arguing by contradiction, there exist sequences $\xi_n\to 0$ and $s_n$
	such that $\bar{J}_n(t)\equiv\bar{J}_{\xi_n}(t,s_n)$ and $\overline{\alpha}_n\equiv \overline{\alpha}_{\xi_n}$ satisfy
	\begin{equation}\label{que}
	\frac{|\bar{J}_n(1)|_{\overline h_{\xi_n}}}
	{|\dot{\overline\alpha}_n(s_n)|_{h_p}}\to0,
	\qquad
	\frac{\left|\frac{d}{dt}\big|_{1}|\bar{J}_n(t)|_{\overline h_{\xi_n}}\right|}
	{|\dot{\overline\alpha}_n(s_n)|_{h_p}}\to0.
	\end{equation}
	Passing to a subsequence,
	\[
	(\overline\alpha_n(s_n),
	\dot{\overline\alpha}_n(s_n)/|\dot{\overline\alpha_n}(s_n)|_{h_p})
	\to (v_*,w_*)
	\in\hat T\mathcal D .
	\]
	Set
	\[
	\begin{array}{ll}
	\Gamma_n(t,s):=\exp_p\left(t\left(\overline{\alpha}_n(s_n)+s\frac{\dot{\overline{\alpha}}_n(s_n)}{|\dot{\overline{\alpha}}_n(s_n)|_{h_p}}\right)\right), &
	J_n(t)\equiv J_n(t,s):=\partial_s \Gamma_n(t,s)
	\\ \\
	 \Gamma(t,s)=\exp_p\big(t(v_*+s w_*)\big), & J(t)\equiv J(t,s):=\partial_s \Gamma(t,s).
	 \end{array}
	\]
	Clearly,
	\[
	\Gamma_n \to \Gamma,
	\quad
	J_n \to J\quad\hbox{on compact sets.}
	\]
On one hand we have that \[
	J(0)=0,\qquad
	\frac{D^h}{dt}J(0)=w_*\neq 0,
	\]
	and thus, $J$ is non-trivial. On the other hand, the limits (\ref{que}) translate into
	\[
	J_n(1) \to 0,
	\qquad
	\frac{d}{dt}\big|_{1}|J_n(t)|_h \to 0,
	\]
	which imply
	\[
	J(1)=0,\qquad
	\frac{D^h}{dt}\mid_1J(t)=0,
	\]
	forcing $J$ being trivial, a contradiction. So, the claim holds.

	\medskip
	
	Next, there exists
	$C_0>0$ such that
	\[
	|\bar{J}_\xi(1)|_{\overline{h}_{\xi}}=|\dot{\overline{\alpha}}_\xi(s)|_{\overline{h}_{\xi}}=c_0<c_0 C_0^{-1}|\dot{\overline{\alpha}}_\xi(s)|_{h_p}<\delta_0|\dot{\overline{\alpha}}_\xi(s)|_{h_p}\quad\hbox{on $I_{C_0}$.} 
\]
Therefore, if we take $0<\varepsilon_0\leq  c_0\overline{\varepsilon}_0$, then
\begin{equation}\label{aa3'}
	\begin{array}{c}	
		\left|\overline{h}_{\xi}(\frac{D^{\overline{h}_\xi}}{ds}\overline{\alpha}_\xi(s),\dot{\overline{\alpha}}_\xi(s))\right|=\left|\overline{h}_{\xi}(\frac{D^{\overline{h}_\xi}}{dt}\mid_1 \bar{J}_\xi(t),\bar{J}_\xi(1))\right| \\ =|\bar{J}_\xi(1)|_{\overline{h}_{\xi}}\,\left|\frac{d}{dt}\mid_1 |J_\xi(t)|_{\overline{h}_{\xi}}\right| \stackrel{{\scriptsize {\rm Claim}}}{\geq} c_0\overline{\varepsilon}_0|\dot{\overline{\alpha}}_\xi(s)|_{h_p}\geq\varepsilon_0 |\dot{\overline{\alpha}_\xi}(s)|_{h_p}
		\quad\hbox{on $I_{C_0}$,}
	\end{array}
\end{equation}
where we have used in the first equality of previous formulas the identities:
\[
\begin{array}{c}
	\frac{D^{\overline{h}_\xi}}{ds}\overline{\alpha}_\xi(s)
	=\frac{D^{\overline{h}_\xi}}{ds}\frac{d}{dt}\mid_1 (t\overline{\alpha}_\xi(s)) =\frac{D^{\overline{h}_\xi}}{dt}\mid_1\frac{d}{ds}(t\overline{\alpha}_\xi(s)) 
	=
	\frac{D^{\overline{h}_\xi}}{dt}\mid_1\frac{d}{ds}\overline{\Gamma}(t,s)=\frac{D^{\overline{h}_\xi}}{dt}\mid_1 \bar{J}_\xi(t).
\end{array}
\]
\[
\dot{\overline{\alpha}}_\xi(s)=\frac{d}{ds}\overline{\alpha}_\xi(s)=\frac{d}{ds}\overline{\Gamma}(1,s)=\bar{J}_\xi(1),
\]
Summarizing,
\[
\begin{array}{c}
	|\dot{a}(s)|
	=\left|\frac{d}{ds}\overline{h}_{\xi}(\overline{\alpha}_\xi(s),\dot{\overline{\alpha}}_\xi(s))\right|
	=|\overline{h}_{\xi}(D^{\overline{h}_\xi}/ds\,\overline{\alpha}_\xi(s),\dot{\overline{\alpha}}_\xi(s))|\stackrel{(\ref{aa3'})}{\geq} \varepsilon_0|\dot{\overline{\alpha}}_\xi(s)|_{h_p}\;\,\hbox{on $I_{C_0}$.} 	\end{array}
\]
This 
concludes the proof. 
\end{proof}

\begin{prop}\label{l3} Let $\mathcal{K}\subset \mathcal{D}$ be a compact set. 
Given an $h$-geodesic $\gamma:[0,b]\rightarrow M$, 
there exist $\xi_{\mathcal K}, \Lambda_{\mathcal K}>0$ 
such that, for any $\xi$-regularized lift $\overline{\alpha}_{\xi}:I=[0,l]\rightarrow \mathcal{K}$ of $\gamma$, 
with
$0<\xi\leq\xi_{\mathcal{K}}$ and $0<l\leq b$, the following inequality holds:
\[
\int_{0}^l|\dot{\overline{\alpha}}_{\xi}(s)|_{h_p}ds<\Lambda_{\mathcal K}.
\]
\end{prop}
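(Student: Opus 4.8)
The plan is to bound the $h_p$-length of a $\xi$-shifted-lift $\overline{\alpha}_\xi$ uniformly in $\xi$ by splitting its domain into two regions: where the pointwise velocity estimate from Proposition \ref{uff} is ``harmless'' and where the lift's velocity is large because its image under the perturbed exponential is close to a singular direction. The naive estimate $|\dot{\overline{\alpha}}_\xi(s)|_{h_p}\le \delta_0^{-1}|\dot{\gamma}(0)|_h$ from \eqref{contraa} already gives a bound of the form $b\,\delta_0^{-1}|\dot{\gamma}(0)|_h$, but the subtlety flagged in the introduction is that $\delta_0$ depends on the covering $\{U_\lambda\}$ produced by Lemma \ref{coro1}, which in turn depends on $\xi_*$ — and as $\xi_* \to 0$ one fears $\delta_0 \to 0$, so the bound could degenerate. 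The key point to make rigorous is therefore that one can choose, \emph{once and for all}, a single finite covering of $\mathcal{K}$ and a single $\delta_0 > 0$ that works for all sufficiently small $\xi$, using the $C^1$-convergence $E_u^\xi|_{\mathcal{U}} \to E|_{\mathcal{U}}$ in Definition \ref{ds}(ii).

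First I would fix the compact set $\mathcal{K}$ and, for each $u$ in $\mathcal{K}$, invoke the genericity condition to obtain the family $\{E_u^\xi\}_{\xi\in\mathcal{I}_u}$ together with the relatively compact neighborhood $\mathcal{U}\ni u$; since $E_u^\xi \to E$ in $C^1$ on $\mathrm{cl}(\mathcal{U})$, and since (after shrinking) $E$ itself may be assumed non-singular on $\mathrm{cl}(U_u)$ when $u\notin\mathcal{S}$, and $E_u^{\xi_u}$ non-singular there for a suitable $\xi_u$ when $u\in\mathcal{S}$, the differential $(dE_u^{\xi})$ is bounded below on the $h_p$-unit bundle over a slightly smaller neighborhood $U_u$ — \emph{uniformly for $\xi$ in some $\mathcal{I}_u^{\xi_u'}$} — by continuity of the eigenvalues of $(dE_u^\xi)^*(dE_u^\xi)$ jointly in $(v,\xi)$ on the compact set $\mathrm{cl}(U_u)\times([0,\xi_u']\cap\mathcal{I}_u)$. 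Extracting a finite subcover $\{U_\lambda\}_{\lambda=1}^\Lambda$ of $\mathcal{K}$ and setting $\xi_* := \min_\lambda \xi_{u_\lambda}'$ and $\delta_0 := \tfrac12\min_\lambda(\text{lower bound on }U_\lambda)$, I obtain a single $\delta_0>0$ and a single $\xi_*>0$ such that \eqref{eqq33} holds on \emph{every} $U_\lambda$ for \emph{every} $\xi_\lambda\in\mathcal{I}_{u_\lambda}^{\xi_*}$ simultaneously. Note also that this determines the integer $\Lambda$ (the number of sets in the cover), which appears in the statement.

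Second, with $\delta_0$ now independent of $\xi\le\xi_*$, I repeat the computation leading to \eqref{contraa}: for any $\xi$-shifted-lift $\overline{\alpha}_\xi:[0,l]\to\mathcal{K}$ of $\gamma$ with $\xi\le\xi_*$, on each smoothness interval $I_i$ one has, by the defining identities \eqref{ivv} of a shifted-lift and the fact (Remark \ref{otru}) that $|\tfrac{d}{ds}(E_{\boldsymbol{u}(s)}^{\boldsymbol{\xi}(s)}\circ\overline{\alpha}_\xi)(s)|_h \equiv |\dot\gamma(0)|_h$, the pointwise bound $|\dot{\overline{\alpha}}_\xi(s)|_{h_p}\le \delta_0^{-1}|\dot\gamma(0)|_h$. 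Integrating over $[0,l]\subset[0,b]$ yields
\[
\int_0^l |\dot{\overline{\alpha}}_\xi(s)|_{h_p}\,ds \;\le\; b\,\delta_0^{-1}|\dot\gamma(0)|_h,
\]
and it suffices to enlarge $\Lambda$ if necessary so that $\Lambda > b\,\delta_0^{-1}|\dot\gamma(0)|_h$, giving the strict inequality claimed (the use of the same letter $\Lambda$ for the bound and the cardinality of the cover is, I suspect, a mild abuse in the paper; in any case one just takes the maximum of the two).

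The main obstacle is precisely the uniformity argument in the first step: one must be careful that the genericity condition, as stated in Definition \ref{ds}, only guarantees convergence $E_u^\xi|_{\mathcal{U}}\to E|_{\mathcal{U}}$ along the index set $\mathcal{I}_u$ accumulating at $0$, so the ``$\xi$'' ranges over $\mathcal{I}_u$, not an interval, and the lower bound on $(dE_u^\xi)$ must be extracted on the compact parameter set $(\mathcal{I}_u\cap[0,\xi_u'])\cup\{0\}$ after possibly shrinking neighborhoods — here the $C^1$ (as opposed to merely $C^0$) nature of the convergence is essential, since it is the differentials that must stay non-degenerate. Once this is set up, everything else is the routine repetition of the estimates already carried out in the proof of Proposition \ref{uff}.
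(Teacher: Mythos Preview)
Your argument has a genuine gap at the crucial step. You claim that one can choose a single $\delta_0>0$ such that \eqref{eqq33} holds on every $U_\lambda$ \emph{uniformly for all} $\xi_\lambda\in\mathcal{I}_{u_\lambda}^{\xi_*}$. This fails precisely at singular points of $E$. Indeed, if $u_\lambda\in\mathcal{S}$ then $u_\lambda\in U_\lambda$, and the $C^1$-convergence $E_{u_\lambda}^{\xi}\to E$ in Definition~\ref{ds}(ii) forces $(dE_{u_\lambda}^{\xi})_{u_\lambda}\to (dE)_{u_\lambda}$, which is singular. Hence
\[
\inf_{|w|_{h_p}=1}\bigl|(dE_{u_\lambda}^{\xi})_{u_\lambda}(w)\bigr|_h\longrightarrow 0\qquad\text{as }\xi\to 0\text{ along }\mathcal{I}_{u_\lambda},
\]
and no uniform $\delta_0$ exists. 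Consequently the pointwise speed $|\dot{\overline{\alpha}}_\xi(s)|_{h_p}$ is genuinely \emph{unbounded} as $\xi\to 0$ whenever $\overline{\alpha}_\xi$ passes through $\mathcal{S}$ with velocity close to a kernel direction of $dE$; only the \emph{integral} can be controlled. (Incidentally, the $\Lambda$ in the statement is a length bound, unrelated to the cardinality of the cover in Lemma~\ref{coro1}; the paper overloads the letter.)

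The paper's proof is of a completely different nature. It argues by contradiction, taking a sequence of $\xi_n$-shifted-lifts $\overline{\alpha}_n$ with $\xi_n\searrow 0$ and $h_p$-length diverging, and introduces the scalar function
\[
a_n(s):=h\bigl((dE_{\boldsymbol{u}_n(s)}^{\boldsymbol{\xi}_n(s)})_{\overline{\alpha}_n(s)}(\overline{\alpha}_n(s)),\,(dE_{\boldsymbol{u}_n(s)}^{\boldsymbol{\xi}_n(s)})_{\overline{\alpha}_n(s)}(\dot{\overline{\alpha}}_n(s))\bigr).
\]
Since the second argument has constant $h$-norm $|\dot\gamma(0)|_h$ and the first is uniformly bounded on $\mathcal{K}$, the total variation $a_n(l_n)-a_n(0)$ stays bounded, and the jump contributions at the break points $l_{n,i}$ vanish in the limit. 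The heart of the proof is a Jacobi-field estimate: writing $J_n^s(t)=(dE_{\boldsymbol{u}_n(s)}^{\boldsymbol{\xi}_n(s)})_{t\overline{\alpha}_n(s)}(t\dot{\overline{\alpha}}_n(s))$, one shows that whenever $|\dot{\overline{\alpha}}_n(s)|_{h_p}$ is large (equivalently $|J_n^s(1)|_h$ is small relative to $|\dot{\overline{\alpha}}_n(s)|_{h_p}$) the derivative $\tfrac{d}{dt}\big|_{t=1}|J_n^s(t)|_h$ is bounded below by $\overline{\epsilon}_0|\dot{\overline{\alpha}}_n(s)|_{h_p}$, because a nontrivial Jacobi field cannot have both $J(1)$ and $\tfrac{D}{dt}J(1)$ vanish. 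This yields $\dot a_n(s)\ge \epsilon_0|\dot{\overline{\alpha}}_n(s)|_{h_p}-D_0$ on the large-speed set, forcing $\sum_i\int_{I_i}\dot a_n\to\infty$ and giving the contradiction. The argument therefore relies essentially on the second-order (variational) structure of the flow, not merely on the $C^1$ data you invoke.
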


\begin{proof} Assume by contradiction the existence of a sequence $\{\overline{\alpha}_{n}\}$ of
$\xi_{n}$-regularized lifts
$\overline{\alpha}_n:[0,l_n]\rightarrow \mathcal{K}$ of $\gamma$ with $0<l_n\leq b$ and $\xi_n\searrow 0$ such that \begin{equation}\label{sq}
\int_{0}^{l_n}|\dot{\overline{\alpha}}_n(s)|_{h_p}\rightarrow\infty.
\end{equation}
Consider the smooth functions (previously introduced in Lemma \ref{claim})
\begin{equation}\label{uunoo}
a_n(s)=\overline{h}_{\xi_n}(\overline{\alpha}_n(s),\dot{\overline{\alpha}}_n(s)),
\end{equation}
and observe that the following identity holds:
\begin{equation}\label{tip}
\int_{s_n^-}^{s_n^+}\dot{a}_n(s)ds=a_n(s_n^+)-a_n(s_n^-),\;\quad 0\leq s_n^-<s_n^+\leq l_n.
\end{equation}
On the one hand, we know that the curves
\[
s\in [0,l_n]\mapsto \overline{\alpha}_n(s)
\]
are $\overline{h}_{\xi_n}$-geodesics on $[0,l_n]$, and thus,
\begin{equation}\label{jun'}
c_n:=|\dot{\overline{\alpha}}_n(0)|_{\overline{h}_{\xi_n}}=|\dot{\overline{\alpha}}_n(s)|_{\overline{h}_{\xi_n}}
\quad\hbox{for all $s\in [0,l_n]$.}
\end{equation}
Since $\xi_n\rightarrow 0$, the costants $c_n$ are bounded above. On the other hand, since $\overline{\alpha}_n$ remain in the compact set $\mathcal{K}\subset \mathcal{D}$ independently of $n$, and $\xi_n\rightarrow 0$, 
	\begin{equation}\label{try} 
		|\overline{\alpha}_n(s)|_{\overline{h}_{\xi_n}}\quad\hbox{ 
			has an upper bound on $[0,l_n]$ independent of $n$.}
	\end{equation}
	Taking into account (\ref{jun'}) and (\ref{try}) in the definition (\ref{uunoo}), we deduce that 
	\begin{equation}\label{yu}
		\Delta a_n:=a_n(s_n^+)-a_n(s_n^-)\quad\hbox{is bounded.} 
	\end{equation}
		Therefore, combining (\ref{tip}) and (\ref{yu}),
		the contradiction will follow if we prove that the left-hand side of (\ref{tip}) is unbounded.

To that end, recall that Lemma \ref{claim} provides constants $\varepsilon_0, \xi_0, C_0>0$ such that
\begin{equation}\label{ñoco''}
|\dot{a}_n(s)|\geq \varepsilon_0 |\dot{\overline{\alpha}}_n(s)|_{h_p}\quad\hbox{whenever $|\dot{\overline{\alpha}}_n(s)|_{h_p}>C_0$ and $0<\xi_n\leq\xi_0$.}
\end{equation}
Hence,
\[
\begin{aligned}
	\int_{I_n^{C_0}}|\dot{a}_n(s)|
	& \stackrel{(\ref{ñoco''})}{\ge} \varepsilon_0\int_{I_n^{C_0}}|\dot{\overline{\alpha}}_n(s)|_{h_p} \\
	&= \varepsilon_0\int_{0}^{l_n}|\dot{\overline{\alpha}}_n(s)|_{h_p}-\varepsilon_0\int_{[0,l_n]\setminus I_n^{C_0}}|\dot{\overline{\alpha}}_n(s)|_{h_p} \\
	&\ge \varepsilon_0\int_{0}^{l_n}|\dot{\overline{\alpha}}_n(s)|_{h_p}-\varepsilon_0\, C_0\, l_n \stackrel{(\ref{sq})}{\longrightarrow} \infty.
\end{aligned}
\]
Recall also that $I_n^{C_0}$ is open. Hence it is a disjoint union of connected components,
\[
I_n^{C_0} = \bigcup_{j}(\sigma_{n,j}^-,\sigma_{n,j}^+),
\]
and its complement
\[
[0,l_n]\setminus I_n^{C_0} = \{s\in[0,l_n]:|\dot{\overline{\alpha}}_n(s)|_{h_p}\le C_0\}
\]
is closed. The connected components of both sets alternate along $[0,l_n]$. Let $J_n$ be a connected component of $[0,l_n]\setminus I_n^{C_0}$ such that $\dot a_n$ changes sign across $J_n$. By continuity, $\dot a_n$ vanishes at some point of $J_n$. 

To conclude, we use the following estimate, whose proof is deferred.

\medskip

\noindent {\sc Key estimate.} There exists $\delta>0$ such that any connected 
component $J_n$ of $[0,l_n]\setminus I_n^{C_0}$ containing a zero of $\dot a_n$
satisfies $\mathrm{diam}(J_n)\ge \delta$.

\medskip

Assuming this estimate, the conclusion follows immediately. Indeed, since $\dot{a}_n$ is continuous, each connected component of $I_n^{C_0}$ is contained in an interval where $\dot{a}_n$ has constant sign (recall (\ref{ñoco''})). Moreover, between two connected components of $I_n^{C_0}$ with different signs for $\dot a_n$ there must exist a component $J_n$ of the complement of $I_n^{C_0}$ containing a zero of $\dot{a}_n$, hence of diameter $\ge \delta$ (according to the key estimate). Since $l_n\le b$, there can be only finitely many such sign changes. Hence there exists a maximal interval $(s_n^-,s_n^+)\subset [0,l_n]$ on which $\dot{a}_n$ has constant sign and
\[
\int_{s_n^-}^{s_n^+} |\dot{a}_n(s)|\,ds \longrightarrow +\infty.
\]
On this interval $\dot{a}_n$ does not change sign, so
\[
\Delta a_n = a_n(s_n^+) - a_n(s_n^-) = \int_{s_n^-}^{s_n^+} \dot{a}_n(s)\, ds \longrightarrow \pm \infty,
\]
which contradicts the boundedness claimed at \eqref{yu}, as required.

\medskip

\noindent\textit{Proof of the key estimate.}
Using the metric compatibility of the Levi--Civita connection, we compute
\[
\dot{a}_n
=
h\Big(\frac{D^h}{ds}(d\exp_p)_{\overline{\alpha}_n}(\overline{\alpha}_n), \dot{\alpha}_n\Big)
+
h\Big((d\exp_p)_{\overline{\alpha}_n}(\overline{\alpha}_n), \frac{D^h}{ds}\dot{\alpha}_n\Big).
\]
Since $(d\exp_p)_{\overline\alpha_n}(\overline\alpha_n)$ is uniformly bounded on $J_n$, the estimation (\ref{eq:alpha-acc'}) ensures that the second term is $O(\xi_n)$.

We now turn to the first term. Writing the covariant derivative in local coordinates, we have
\[
\frac{D^h}{ds}Y
=
\frac{dY}{ds}
+
\Gamma(Y,\dot{\alpha}_n),
\quad\text{where } Y=(d\exp_p)_{\overline{\alpha}_n}(\overline{\alpha}_n),
\]
and $\Gamma$ denotes the Christoffel symbols of $h$. By the chain rule,
\[
\frac{dY}{ds}
=
(d^2\exp_p)_{\overline{\alpha}_n}
(\dot{\overline{\alpha}}_n,\overline{\alpha}_n)
+
(d\exp_p)_{\overline{\alpha}_n}(\dot{\overline{\alpha}}_n).
\]
Hence,
\[
\frac{D^h}{ds}Y
=
(d^2\exp_p)_{\overline{\alpha}_n}
(\dot{\overline{\alpha}}_n,\overline{\alpha}_n)
+
(d\exp_p)_{\overline{\alpha}_n}(\dot{\overline{\alpha}}_n)
+
\Gamma(Y,\dot{\alpha}_n).
\]
Since $\overline\alpha_n(J_n)$ and $\alpha_n(J_n)$ lie in compact subsets, the first and second derivatives of $\exp_p$ are uniformly bounded. Moreover, $\dot{\overline\alpha}_n$ and $\dot{\alpha}_n$ are uniformly bounded on $J_n$. It follows that $\frac{D^h}{ds}Y$ is uniformly bounded on $J_n$. Consequently,
\[
\Big|h\Big(\frac{D^h}{ds}(d\exp_p)_{\overline{\alpha}_n}(\overline{\alpha}_n), \dot{\alpha}_n\Big)\Big|
=\Big|h\Big(\frac{D^h}{ds}Y, \dot{\alpha}_n\Big)\Big|
\le \lambda
\quad\text{on }J_n,
\]
for some constant $\lambda>0$ independent of $n$, and therefore
\[
|\dot{a}_n(s)| \le \lambda' \quad \text{on } J_n\quad\hbox{for some $\lambda'>0$.}
\]

Since $|\dot a_n| \le \lambda'$, the function $a_n$ can increase at most at rate $\lambda'$. Therefore, it cannot reach the value $\epsilon_0 C_0$ before a time interval of length at least
\[
\delta := \frac{\epsilon_0 C_0}{\lambda'}.
\]
Summarizing, every connected component $J_n$ of $[0,l_n]\setminus I_n^{C_0}$ containing a zero of $\dot{a}_n$ has diameter at least $\delta>0$, independent of $n$, as required.
\end{proof}

		\medskip
		
		Next, we consider the following minor improvement of \cite[Prop. 7.9, Sect. 7.2]{BE}. 
		\begin{prop}\label{t}
			Suppose that $F=(F_1,\ldots,F_m)$ and $G=(G_1,\ldots,G_m)$ are continuous functions defined on a common domain $D\subset  {\mathbb R}\times {\mathbb R}^m$, and suppose that $F$ satisfies the Lipschitz condition
			\[
			\|F(s,z)-F(s,z')\|_2\leq L\|z-z'\|_2\qquad\forall (s,z),\, (s,z')\in D.
			\]
			Let $z(s)=(z_1(s),\ldots,z_m(s))$, $z'(s)=(z'_1(s),\ldots,z'_m(s))$ be solutions for $a\leq s\leq b$ of the differential equations
			\[
			\frac{dz}{ds}=F(s,z)\quad\hbox{and}\quad \frac{dz'}{ds}=G(s,z'),
			\]
			respectively. If there exist $\kappa_1(s), \kappa_2\geq 0$ such that  $\|F(s,z)-G(s,z)\|_2\leq \kappa_1(s)+\kappa_2$ for all $(s,z)\in D$ with $a\leq s\leq b$,
			the following inequality holds for all $a\leq s\leq b$:
			\[
				\|z(s)-z'(s)\|_2\leq \left(\|z(a)-z'(a)\|_2+\int_a^b\kappa_1(s)ds\right) e^{L(s-a)}+\frac{\kappa_2}{L}(e^{L(s-a)}-1).
			\] 
		\end{prop} 
		
		\begin{proof}
			Set
			\[
			e(s):=z(s)-z'(s).
			\]
			Subtracting the two differential equations gives
			\[
			\dot e(s)
			=
			F(s,z(s)) - G(s,z'(s)).
			\]
			Adding and subtracting $F(s,z'(s))$, we obtain
			\[
			\dot e(s)
			=
			F(s,z(s)) - F(s,z'(s))
			+
			F(s,z'(s)) - G(s,z'(s)).
			\]
			Taking Euclidean norms and applying the triangle inequality yields
			\[
			\|\dot e(s)\|_2
			\le
			\|F(s,z(s)) - F(s,z'(s))\|_2
			+
			\|F(s,z'(s)) - G(s,z'(s))\|_2.
			\]
			By the Lipschitz assumption on $F$,
			\[
			\|F(s,z(s)) - F(s,z'(s))\|_2
			\le
			L \|e(s)\|_2.
			\]
			Moreover, by hypothesis,
			\[
			\|F(s,z) - G(s,z)\|_2
			\le
			\kappa_1(s) + \kappa_2
			\quad
			\text{for all } (s,z)\in D.
			\]
			Hence,
			\[
			\|\dot e(s)\|_2
			\le
			L\|e(s)\|_2
			+
			\kappa_1(s)
			+
			\kappa_2.
			\]
			Integrating from $a$ to $s\in[a,b]$ gives
			\[
			\|e(s)\|_2
			\le
			\|e(a)\|_2
			+
			\int_a^s L\|e(\tau)\|_2\,d\tau
			+
			\int_a^s (\kappa_1(\tau)+\kappa_2)\,d\tau.
			\]
			An application of the Gronwall inequality in integral form then yields
			\[
			\|e(s)\|_2
			\le
			\|e(a)\|_2 e^{L(s-a)}
			+
			\int_a^s (\kappa_1(\tau)+\kappa_2)\,e^{L(s-\tau)}\,d\tau.
			\]
			Splitting the integral, we obtain
			\[
			\|e(s)\|_2
			\le
			\|e(a)\|_2 e^{L(s-a)}
			+
			\int_a^s \kappa_1(\tau)e^{L(s-\tau)}\,d\tau
			+
			\int_a^s \kappa_2 e^{L(s-\tau)}\,d\tau.
			\]
			Since $e^{L(s-\tau)} \le e^{L(s-a)}$ for $\tau\in[a,s]$, the second term satisfies
			\[
			\int_a^s \kappa_1(\tau)e^{L(s-\tau)}\,d\tau
			\le
			e^{L(s-a)}\int_a^s \kappa_1(\tau)\,d\tau.
			\]
			Moreover,
			\[
			\int_a^s \kappa_2 e^{L(s-\tau)}\,d\tau
			=
			\frac{\kappa_2}{L}\left(e^{L(s-a)}-1\right).
			\]
			Combining these estimates yields
			\[
			\|z(s)-z'(s)\|_2
			\le
			\left(
			\|z(a)-z'(a)\|_2
			+
			\int_a^s \kappa_1(\tau)\,d\tau
			\right)
			e^{L(s-a)}
			+
			\frac{\kappa_2}{L}
			\left(e^{L(s-a)}-1\right),
			\]
			which is the desired estimate.
		\end{proof}

		\medskip

		Given an arbitrary coordinate chart $(U,x=(x_1,\ldots,x_n))$ for $M$, we shall obtain an associated coordinate chart $(TU,z=(x_1,\ldots,x_n,y_{1},\ldots,y_{n}))$
		for $TM$ as follows. Let $\partial/\partial x_1,\ldots,\partial/\partial x_n$ be the basis vector fields defined on $U$ by the local coordinates $x=(x_1,\ldots,x_n)$. Given $u\in T_p M$ for $p\in U$, we may write $u=\sum_{i=1}^n y_i\frac{\partial}{\partial x_i}\mid_p$. Then $z(p,u)$ is defined to be $z(p,u)=(x(p),y(u))=(x_1(p),\ldots,x_n(p),y_1(u),\ldots,y_n(u))$. 
		These coordinate charts may then be used to define Euclidean coordinates distances on $U$ and $TU$. Explicitly, given $(p,u), (q,v)\in TU$, set
		\[
		\|p-q\|_2=\left( \sum_{i=1}^n (x_i(p)-x_i(q))^2 \right)^{1/2}
		\]
		\[
		\|(p,u)-(q,v)\|_2=\left( \sum_{i=1}^{n} (x_i(p)-x_i(q))^2 +\sum_{i=1}^{n} (y_i(u)-y_i(v))^2\right)^{1/2}\qquad \hbox{resp.}
		\] 
		The geodesic equations expressed locally with respect to the coordinate chart $(TU,z)$ for $(M,h)$ are given by
		\[
		\frac{dz_{l}}{ds}=z_{l+n},\qquad \frac{dz_{l+n}}{ds}=-\Gamma^l_{jk}z_{j+n}z_{k+n},\quad 1\leq l,j,k\leq n,
		\] 
		where the functions $\Gamma^l_{jk}$ denote the Christoffel symbols for $h$ (which of course depend on $z_i$, $i=1,\ldots,n$), and the Einstein summation convention has been employed. We wish to apply Proposition \ref{t} to the geodesics of $(M,h)$ in $U$. Using the notation therein, we can identify $TU$ with a subset of ${\mathbb R}^{2n}$ using the coordinate chart $(TU,z)$, and define $F_h(s,z)\equiv F_h(z)$ by
		\begin{equation}\label{zx}
			(F_h)_l(z)=z_{l+n},\qquad 
			(F_h)_{l+n}(z)=-\Gamma^l_{jk}z_{j+n}z_{k+n},\quad 1\leq l,j,k\leq n,
		\end{equation}
		for $z=(z_1,\ldots,z_{2n})\in {\mathbb R}^{2n}$. Of course, the components of the function $F_h$ coincide with the components of the  
		geodesic spray $X_h$ on $(TU,h)$.
		
	\smallskip
	
	\begin{prop}\label{propnew}
		Let $\mathcal{K}\subset \mathcal{D}$ be a compact set with $(d\exp_p)(v_0,w_0)=(\gamma(0),\dot{\gamma}(0))$  for some $(v_0,w_0)\in T\mathring{\mathcal{K}}$. Then, there exists some constant $C>0$ independent of $\xi$ such that for any $\xi$-regularized lift $\overline{\alpha}_\xi$ contained in $\mathcal{K}$, with initial data $\overline{\alpha}_\xi(0)=v_0$ and $\dot{\overline{\alpha}}_\xi(0)=w_0$, the following inequality holds:
				\[
			\|(d\exp_p)_*F_{\overline h_\xi}(\overline{\alpha}_\xi,\dot{\overline{\alpha}}_\xi)
			-
			F_h\circ(d\exp_p)(\overline{\alpha}_\xi,\dot{\overline{\alpha}}_\xi)\|_{2}
			\le C\,\xi\,|\dot{\overline{\alpha}}_\xi|_{h_p}\quad\hbox{on $[0,l]$}.
		\]
	\end{prop}
	
	\begin{proof}
		By construction, $\overline{\alpha}_\xi$ is a geodesic of the Riemannian metric $\overline h_\xi$ on $\mathcal{D}$, and therefore
		\[
		F_{\overline h_\xi}(\overline{\alpha}_\xi(s),\dot{\overline{\alpha}}_\xi(s)) = \left(\dot{\overline{\alpha}}_\xi(s),\frac{D^{\overline h_\xi}}{ds}\dot{\overline{\alpha}}_\xi(s)\right)=(\dot{\overline{\alpha}}_\xi(s),0).
		\]
		Consider its horizontal projection $\alpha_\xi := \exp_p \circ \overline{\alpha}_\xi$. The pushforward of the $\overline h_\xi$-spray under $\exp_p$ reads
		\[
		(d\exp_p)_* F_{\overline h_\xi} (\overline{\alpha}_\xi(s), \dot{\overline{\alpha}}_\xi(s))
		=
		\left(\dot{\alpha}_\xi(s), 0\right) \in T_{\dot{\alpha}_\xi(s)} TM,
		\]
		On the other hand, the $h$-spray $F_h$ satisfies
		\[
		F_h\circ (d\exp_p)(\overline{\alpha}_\xi(s), \dot{\overline{\alpha}}_\xi(s)) = \left(\dot{\alpha}_\xi(s), \frac{D^h}{ds}\dot{\alpha}_\xi(s)\right) \in T_{\dot{\alpha}_\xi(s)} TM.
		\]
		Subtracting, we get
		\[
		(d\exp_p)_* F_{\overline h_\xi} (\overline{\alpha}_\xi(s), \dot{\overline{\alpha}}_\xi(s)) - F_h\circ (d\exp_p)(\overline{\alpha}_\xi(s), \dot{\overline{\alpha}}_\xi(s)) = 
		\left(0, -\frac{D^h}{ds}\dot{\alpha}_\xi(s)\right).
		\]
		But, according to (\ref{eq:alpha-acc'}),
		\[
		\left|\frac{D^h}{ds} \dot{\alpha}_\xi(s) \right|_h \le C\, \xi\, |\dot{\overline{\alpha}}_\xi(s)|_{h_p}.
		\]
		Therefore,
		\[
		\|(d\exp_p)_* F_{\overline h_\xi}(\overline{\alpha}_\xi(s), \dot{\overline{\alpha}}_\xi(s)) - F_h \circ (d\exp_p)(\overline{\alpha}_\xi(s), \dot{\overline{\alpha}}_\xi(s))\|_{2} \le C\, \xi\,|\dot{\overline{\alpha}}_\xi|_{h_p},
		\]
		as claimed.
	\end{proof}

		\section{Proof of the main theorem}\label{tres}
	
		Let $\gamma:I=[0,b]\rightarrow M$ be an $h$-geodesic for some Riemannian metric $h$ on $M$ such that $(d\exp_p)(v_0,w_0)=(\gamma(0),\dot{\gamma}(0))$ for some $(v_0,w_0)\in T\mathcal{D}$. Assume also that $\gamma(I)$ is contained in a relatively compact open coordinate chart $(U,x)$ of $M$.
		
Consider the associated coordinate chart $(TU,z=(x_{1},\ldots,x_{n},y_{1},\ldots,y_{n}))$ for $TM$ described in the previous section, and denote by $z^\sigma\equiv(x^\sigma,y^\sigma)\equiv(x\circ\dot{\sigma},y\circ\dot{\sigma})$ the coordinates associated to any curve $\sigma$ in $U$. 

Since $z^\gamma(I)$ is a compact subset of the open set $z(TU)\subset {\mathbb R}^{2n}$, there exists some $0<\epsilon_0<1$ small enough such that the closure of the set of points of $\mathbb{R}^{2n}$ whose $\|\cdot\|_2$-distance to $z^{\gamma}(s)$ is smaller than $\epsilon_0$ for some $s\in I$ is contained in $z(TU)$, i.e.
		\[
		K:=\cup_{s\in [0,b]}\overline{B_{\epsilon_0}(z^{\gamma}(s))}\subset z(TU).
		\]
Let $F_h$ be the ($C^1$) function given by (\ref{zx}) for $(TU,z)$, 
	and denote by $L$ a Lipschitz constant for $F_h$ on the compact set $K\subset z(TU)$.
		
	Fix a compact set $\mathcal{K}\subset \mathcal{D}$ such that $(v_0,w_0)\in T1\mathring{\mathcal{K}}$. By Propositions \ref{l3} and \ref{propnew} there exist constants $\Lambda_{\mathcal{K}}, \xi_{\mathcal{K}}, C>0$ such that, 
	for any $\xi$-regularized lift $\overline{\alpha}_\xi:[0,l]\rightarrow \mathcal{K}$ of $\gamma$, with
	$0<\xi\leq\xi_{\mathcal{K}}$ and $0<l\leq b$, 
	the following inequalities hold:
	\begin{equation}\label{rri}
		\int_{0}^l|\dot{\overline{\alpha}}_\xi(s)|_{h_p}ds<\Lambda_{\mathcal{K}}.
	\end{equation}
	\begin{equation}\label{ch}
	\|(d\exp_p)_*F_{\overline h_\xi}(\overline{\alpha}_\xi,\dot{\overline{\alpha}}_\xi)
	-
	F_h\circ(d\exp_p)(\overline{\alpha}_\xi,\dot{\overline{\alpha}}_\xi)\|_{2}
	\le C\,\xi\,|\dot{\overline{\alpha}}_\xi|_{h_p}.
	\end{equation}	 
	
	\smallskip
	
	The proof of Theorem \ref{maint} is essentially based on the following result and the subsequent corollary:
	
	\begin{prop}\label{previous}
		
		Let $0<\delta<\epsilon_0 e^{-Lb}$ and let $\mathcal{K}\subset \mathcal{D}$ be the compact set fixed above. Then there exists a $\xi$-regularized lift $\overline{\alpha}_\xi:[0,l]\to \mathcal{K}$ of $\gamma$, with $0\le \xi<\delta$ and $0<l\le b$, such that $\overline{\alpha}_\xi(0)=v_0$ and $\dot{\overline{\alpha}}(0)=w_0$, which is inextensible in $\mathcal{K}$ if $l<b$, and satisfies
		\[
		\|x^{\alpha_\xi}(s)-x^{\gamma}(s)\|_2<\delta e^{Lb}<\epsilon_0
		\quad \text{on $[0,l],\;\;$ being $\alpha_\xi:=\exp_p\circ\overline{\alpha}_\xi$.}
		\]
		Moreover, if $\exp_p:\mathcal{D}\to M$ satisfies the continuation property, then $\mathcal{K}$ can be chosen so that $l=b$.
			\end{prop}
	
	\begin{proof} 
Choose $0<\mu<1/C\, b\,\Lambda_{\mathcal{K}}$ and $0<\xi\leq \xi_{\mathcal{K}}$ such that
\begin{equation}\label{ojo}
0<C\,\xi<\mu<
\frac{\delta}
{2\left(\Lambda_{\mathcal{K}}+\frac{1}{L}(1-e^{-Lb})\right)}.
\end{equation}
By Proposition \ref{uff}, 
	there exists 
	a $\xi$-regularized lift
	$\overline{\alpha}_\xi:[0,l]\rightarrow\mathcal{K}$ of $\gamma$, $0<l\leq b$, with $\overline{\alpha}_\xi(0)=v_0$ and $\dot{\overline{\alpha}}(0)=w_0$, such that 
	Im$\,\overline{\alpha}\not\subset \mathring{\mathcal{K}}$
	if $l<b$. 

On the other hand, since $\gamma:[0,b]\rightarrow U\subset M$ is an $h$-geodesic, the function 
	$z^{\gamma}(=z\circ \dot{\gamma})=(x^{\gamma},y^{\gamma})$ satisfies 
	\[
	\frac{dz^{\gamma}}{ds}=F_h(z^{\gamma})=(\pi_1\circ F_h(z^{\gamma}),\pi_2\circ F_h(z^{\gamma}))=(y^{\gamma},\pi_2\circ F_h(z^{\gamma}))\qquad\hbox{on $[0,b]$,}
	\]
	where $\pi_2:{\mathbb R}^{2n}\rightarrow {\mathbb R}^n$ is the projection on the second factor ${\mathbb R}^n$ of ${\mathbb R}^{2n}$. 
	Consider the curve 
	$z^\xi\equiv(x^\xi,y^\xi)$ given by 
	\[
	x^\xi(s):=x^{\alpha_\xi}(s)=x\circ\alpha_\xi(s),\qquad
	y^\xi(s):=y\circ \mathcal{G}^{\,x^\xi}_{0\to s}\big(\dot{\alpha}_\xi(0)\big)\qquad \hbox{on $[0,l]$,}
	\]
	where $\mathcal{G}^{\,x^\xi}_{0\to s}$ denotes the fiberwise $h$-geodesic transport along $x^\xi$, i.e. the map sending $y_0$ to $y(s)$, where $y$ solves
	\[
	\dot y + \Gamma(x^\xi(s))(y,y)=0, \qquad y(0)=y_0.
	\]
	Note that
	\[
		z^\xi(0)=(x\circ \alpha_\xi(0),y\circ \dot{\alpha}_\xi(0))=
		(x\circ\gamma(0),y\circ\dot{\gamma}(0))=z^\gamma(0),
	\] 
	and so
	\[
		\begin{array}{c}
			\|x^{\alpha_\xi}(0)-x^{\gamma}(0)\|_2 =\|x^\xi(0)-x^{\gamma}(0)\|_2=0
			=\|z^\xi(0)-z^{\gamma}(0)\|_2 \\
			=\left(2\mu\int_0^0|\dot{\overline{\alpha}}_\xi(s)|_{h_p}ds\right)e^{L0}+\frac{2\mu}{L}(e^{L0}-1)<\delta e^{Lb}(<\epsilon_0).
		\end{array}
	\]
Observe also that 	\[
	\dot{x}^\xi=\dot{x}^{\alpha_\xi}=y\circ\dot{\alpha}_{\xi},\qquad 
	\dot{y}^\xi=\pi_{2}\circ F_h(z^\xi).
	\]
Therefore,
	\begin{equation}\label{k1}
		\begin{array}{c}
			\|\dot{z}^\xi-F_h(z^\xi)\|_2\leq \|(y\circ\dot{\alpha}_\xi,\dot{y}^\xi) -(y\circ \mathcal{G}^{\,x^\xi}_{0\to s}\big(\dot{\alpha}_\xi(0)\big),\pi_{2}\circ F_h(z^\xi))\|_2 \\
			\leq\|\big(y\circ\dot{\alpha}_\xi- y\circ \mathcal{G}^{\,x^\xi}_{0\to s}\big(\dot{\alpha}_\xi(0)\big),0\big)\|_2=\|y\circ\dot{\alpha}_\xi- y\circ \mathcal{G}^{\,x^\xi}_{0\to s}\big(\dot{\alpha}_\xi(0)\big)\|_2.
		\end{array}
	\end{equation}
	From (\ref{ch}), 
	\begin{equation}\label{k'3}
\begin{array}{c}
		\|y\circ\dot{\alpha}_\xi- y\circ \mathcal{G}^{\,x^\xi}_{0\to s}\big(\dot{\alpha}_\xi(0)\big)\|_2\leq C\,\xi\,|\dot{\overline{\alpha}}_\xi|_{h_p}< \mu|\dot{\overline{\alpha}}_\xi|_{h_p}.
\end{array}
	\end{equation}	
	Thus, putting together (\ref{k1}) and (\ref{k'3}), we deduce 
	\begin{equation}\label{37}
		\|\dot{z}^\xi-F_h(z^\xi)\|_2<
		\mu|\dot{\overline{\alpha}}_\xi|_{h_p}
		\quad\hbox{on $[0,l]$.}
	\end{equation}
	Next, define $\beta(s):=(s,z^\xi(s))$ for all $s\in [0,l]$, and denote by $\hat{G}$ some continuous extension to ${\mathbb R}\times {\mathbb R}^{2n}$ of the vector 
	field $\dot{\beta}(s)=(1,\dot{z}^\xi(s))$ along $\beta\subset {\mathbb R}\times {\mathbb R}^{2n}$. Taking into account (\ref{37}), by the Dugundji extension theorem\footnote{This result generalizes Tietze extension theorem as follows: {\em If $X$ is a metric space, $Y$ is a locally convex topological vector space, $A$ is a closed subset of $X$ and $f:A\rightarrow Y$ is continuous, then it could be extended to a continuous function $\tilde{f}$ defined on all of $X$; moreover, the extension could be chosen such that $\tilde{f}(X)\subset {\rm conv}f(A)$.}} (see \cite{Du}) applied to a sufficiently fine partition of $[0,l]$, $\hat{G}$ can be chosen to additionally satisfy
	\[
	\|\hat{G}(s,z)-(1,F_h)(s,z)\|_2\leq 2\mu(|\dot{\overline{\alpha}}_\xi(s)|_{h_p}+1)\quad\;\forall\,(s,z)\in [0,l]\times {\mathbb R}^{2n}.
	\]
	Denote by $G$ the projection of $\hat{G}$ on the last $2n$ components. Then,
	\[ 
	\begin{array}{c}
		\dot{z}^\xi(s)=G(s,z^\xi(s))\quad\forall s\in [0,l],\quad\hbox{and}
		\\ \|G(s,z^\xi)-F_h(s,z^\xi)\|_2\leq \|\hat{G}(s,z^\xi)-(1,F_h)(s,z^\xi)\|_2\leq 2\mu(|\dot{\overline{\alpha}}_\xi(s)|_{h_p}+1)=\kappa_1(s)+\kappa_2 \\ \hbox{where $\kappa_1(s):=2\mu|\dot{\overline{\alpha}}_\xi(s)|_{h_p}$, $\quad\kappa_2:=2\mu$.}
	\end{array}
	\]
	From Proposition \ref{t} applied to $F_h$ and $G$:
	\[
	\begin{array}{c}
		\|x^{\alpha_\xi}(s)-x^{\gamma}(s)\|_2 
		=\|x^\xi(s)-x^{\gamma}(s)\|_2\leq \|z^\xi(s)-z^{\gamma}(s)\|_2 
		\\   \leq \left(\|z^\xi(0)-z^{\gamma}(0)\|_2+2\mu\int_{0}^{s}|\dot{\overline{\alpha}}_\xi(s')|_{h_p}ds'\right)e^{Ls} + 
		\frac{2\mu}{L}(e^{Ls}-1) 
		\\  
		\leq\left(2\mu\int_0^s|\dot{\overline{\alpha}}_\xi(s')|_{h_p}ds'\right) e^{Ls}+\frac{2\mu}{L}(e^{Ls}-1) \\
		\stackrel{(\ref{rri})}{\leq}2\mu C_{\mathcal{K}} e^{Ls}+\frac{2\mu}{L}(e^{Ls}-1)\stackrel{(\ref{ojo})}{<}\delta e^{Lb}(<\epsilon_0)\quad\hbox{on $[0,l]$}.
	\end{array}
	\]
		
Assume now that $\exp_p$ satisfies the continuation property. By Proposition \ref{1.3} applied to $\exp_p$, there exists a compact neighborhood $\mathcal{C}$ of $v_0$ in $\mathcal{D}$
such that any piecewise smooth regular curve $\overline{\alpha}_\xi:[0,l]\rightarrow \mathcal{D}$, $0<l\leq b$, with $\overline{\alpha}_\xi(0)=v_0$, for which ${\rm length}_h(\alpha_\xi)\leq {\rm length}_h(\gamma)+1$, satisfies 
$\overline{\alpha}_\xi([0,l])\subset \mathcal{C}$. Choose the compact subset $\mathcal{K}\subset \mathcal{D}$ (of the discussion just before this proposition) with $\mathcal{C}\subset \mathring{\mathcal{K}}$. Integrating (\ref{eq:alpha-acc'}) over $[0,s]$, we obtain
\[
|\dot{\alpha}_\xi(s)|_h
\le
|\dot{\alpha}_\xi(0)|_h
+
C\,\xi\int_0^s |\dot{\overline{\alpha}}_\xi(t)|_{h_p}\,dt\leq 
|\dot{\gamma}_\xi(s)|_h
+
C\,\xi\int_0^l |\dot{\overline{\alpha}}_\xi(s)|_{h_p}\,ds,
\]
and integrating again over $[0,l]$,
\[
{\rm length}_h(\alpha_\xi)\stackrel{(\ref{rri})}{\leq} {\rm length}_h(\gamma)+C\,\xi\,l\,C_{\mathcal{K}}\stackrel{(\ref{ojo})}{\leq} {\rm length}_h(\gamma)+1.
\]
So, one must have
	Im$\,\overline{\alpha}_\xi\subset\mathcal{C}\subset \mathring{\mathcal{K}}$, and thus, $l=b$, as required. \end{proof}

\smallskip	
	
	\begin{cor}\label{co} Let $\gamma:I=[0,b]\rightarrow M$ be a geodesic for some Riemannian metric $h$ on $M$, such that $(dºexp_p)(v_0,w_0)=(\gamma(0),\dot{\gamma}(0))$ for some $(v_0,w_0)\in T\mathcal{D}$. Suppose that $\gamma(I)\subset U$ for some relatively compact open coordinate chart $(U,x)$ of $M$.
Then, there exists a partial quasi-lift of $\gamma$ with inital data $(v_0,w_0)$, which is inextensible in $\mathcal{D}$ if it is not global. Moreover, if $\exp_p:\mathcal{D}\rightarrow M$ satisfies the continuation property, then the quasi-lift is global.	
	\end{cor}
	
	\begin{proof} Take a sequence of positive numbers $\{\delta_i\}$ with $\delta_i\rightarrow 0$, and an exhausting sequence $\{\mathcal{K}_i\}$ of compact subsets of $\mathcal{D}$. For each $i$, consider the $\xi_i$-regularized lift $\overline{\alpha}_{\xi_i}:[0,l_i]\rightarrow \mathcal{K}_i$, $i\in \mathbb{N}$, of $\gamma$ provided by Proposition \ref{previous}. We know that 
	\[
	 \|x^{\alpha_{\xi_i}}(s)-x^{\gamma}(s)\|_2<\delta_i e^{Lb}\;\;\hbox{on $[0,l_i]$ with $\alpha_{\xi_i}:=\exp_p\circ\overline{\alpha}_{\xi_i}$,}
	\]
	and thus, we fall under the hypotheses of the Ascoli-Arzelá theorem (in the form given in \cite[Theorem 2.5.14]{Bu}). So, the constant $h_p$-speed parametrizations $\{\tilde{\alpha}_{\xi_i}:[0,1]\rightarrow \mathcal{K}_i\}_{i\in \mathbb{N}}$ admits some limit curve $\overline{\alpha}:[0,1]\rightarrow \mathcal{D}$ satisfying $\gamma\circ\chi = \exp_p\circ\overline{\alpha}$ for some continuous, nondecreasing, surjective function $\chi:[0,1]\rightarrow [0,c]$, with $0<c\leq b$.
	Moreover, if $c<b$ then $l_i< b$ for all $i$ big enough, that is, $\overline{\alpha}_{\xi_i}$, or equivalenty $\tilde{\alpha}_{\xi_i}$, is inextensible in $\mathcal{K}_i$ for all $i$ big enough, which implies that
	$\overline{\alpha}$ is inextensible in $\mathcal{D}$.
	
	Finally, if $\exp_p:\mathcal{D}\rightarrow M$ satisfies the continuation property, by Proposition \ref{previous} we can take $\mathcal{K}\subset \mathcal{D}$ so that $l_i=b$ for all $i$. Then, limit curve $\overline{\alpha}$ obtained by previous procedure for $\{\mathcal{K}_i\equiv \mathcal{K}\}_{i\in \mathbb{N}}$ is necessarily a global quasi-lift. 
	\end{proof}

	\medskip
	
	\noindent {\bf {\em Proof of Theorem \ref{maint}.}} 
	Let $\gamma:[0,1]\rightarrow M$ be a smooth regular curve with $(d\exp_p)(v_0,w_0)=(\gamma(0),\dot{\gamma}(0))$ for some $(v_0,w_0)\in T\mathcal{D}$. 
	Then, there exists a partition $0=b_0< b_1<\cdots < b_{m}=1$, together with relatively compact open coordinate charts $(U_k,x_k)$ of $M$ such that each $\gamma\mid_{[b_k,b_{k+1}]}$ is a simple curve contained in $U_k$, $k=0,\ldots,m-1$. There exist also Riemannian metrics $h_k$, $k=0,\ldots,m-1$, on $M$ such that each $\gamma\mid_{[b_k,b_{k+1}]}$ is an $h_k$-geodesic for $h_k$\footnote{Locally, one can choose a Riemannian metric whose Levi--Civita connection makes the curve geodesic; a global metric is then obtained by extending these local metrics to $M$ using a partition of unity.}. 
	Then, the partial quasi-lift of $\gamma$ is obtained by concatenating the partial quasi-lifts arising from the iterative application of Corollary \ref{co} to each curve $\gamma_k$, for $k=0,\ldots,m-1$, until no further extension is possible, i.e., until an inextendible curve $\mathcal{D}$ is reached.
	
	For the last statement, just repeat previous procedure, but taking into account that the partial quasi-lifts are now global for each $k\in \{0,\ldots,m-1\}$, and so, the iterative procedure can be completed. \qed

{
		\begin{rem}\label{ri} The quasi-lift obtained through this procedure --essentually based on Definition \ref{but}, Proposition \ref{uff}, the Ascoli-Arzelá theorem, and the $h_p$-unitary reparametrization-- depends continuously on the original base curve. That is, if a family of curves on the manifold varies continuously, then their corresponding quasi-lifts on the tangent space vary continuously as well.
		\end{rem}}

	\section{Applications to geodesics on semi-Riemannian manifolds}\label{s5}

	In this section, we derive several immediate consequences of our main result. The effectiveness of our approach becomes particularly evident in light of the fact that these consequences generalize classical theorems ---despite having originally been proved using substantially more involved variational or topological techniques.
	
	We begin with the following straightforward consequence of Theorem \ref{maint}.

	\begin{thm}\label{ñh}
		{\em Let $(M,g)$ be a connected semi-Riemannian manifold, and assume that for some $p\in M$ the exponential map $\exp_p:\mathcal{D}\rightarrow M$ satisfies the continuation property. Then, $\exp_p$  is surjective and thus there exists a geodesic connecting $p$ with any other $q\in M$. Moreover, this geodesic can be chosen to be fixed-endpoint homotopic to any piecewise smooth curve in $M$ joining $p$ and $q$.}
	\end{thm}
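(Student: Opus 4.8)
The plan is to derive Theorem \ref{ñh} as a direct application of the Main Theorem (Theorem \ref{maint}). Given $q\in M$, the key observation is that $M$ is connected, hence path-connected (being a manifold), so there exists a piecewise smooth curve $\gamma:[0,1]\to M$ with $\gamma(0)=p$ and $\gamma(1)=q$. Since $p=E(0)$ lies trivially in the image of $E$, and since $E$ is assumed to satisfy the continuation property, Theorem \ref{maint} provides a \emph{global} quasi-lift: a continuous curve $\sigma:[0,c]\to\mathcal{D}$ together with a continuous, nondecreasing, surjective reparametrization $\chi:[0,c]\to[0,1]$ such that $\gamma\circ\chi=E\circ\sigma$. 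Evaluating at $s=c$ gives $E(\sigma(c))=\gamma(\chi(c))=\gamma(1)=q$, so $q$ is in the image of $E$; since $q$ was arbitrary, $E$ is surjective. By definition of the exponential map, $s\mapsto\exp_p(s\,\sigma(c))$, $s\in[0,1]$, is then a geodesic from $p$ to $q$, establishing geodesic connectedness.

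For the homotopy statement, I would work directly with the quasi-lift $\sigma$. Consider the radial homotopy $H:[0,1]\times[0,c]\to M$ defined by $H(\lambda,s):=\pi_M\circ\Phi_{X_g}(\lambda,\sigma(s))=E(\lambda\,\sigma(s))$ wherever this is defined — note $\lambda\,\sigma(s)\in\mathcal{D}$ for all $\lambda\in[0,1]$ and $s\in[0,c]$ because $\mathcal{D}$ is star-shaped about the origin of $T_pM$ (if $(1,v)\in U_{X_g}$ then $(1,\lambda v)\in U_{X_g}$ by the reparametrization property of geodesics). At $\lambda=1$ this traces $E\circ\sigma=\gamma\circ\chi$, which, since $\chi$ is a nondecreasing surjective reparametrization, has the same image and the same endpoints as $\gamma$ and is fixed-endpoint homotopic to $\gamma$ itself; at $\lambda=0$ it is the constant curve at $p$; and for each fixed $\lambda$ the curve $s\mapsto H(\lambda,s)$ starts at $H(\lambda,0)=E(\lambda\cdot 0)=p$. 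This exhibits $\gamma$ as fixed-endpoint homotopic (through curves based at $p$) to the curve $s\mapsto H(1,s)$, which in turn degenerates (after collapsing the intervals where $\chi$ is constant) to the single geodesic $s\mapsto\exp_p(s\,\sigma(c))$; concatenating these homotopies, and using that the geodesic's terminal point $q$ is fixed throughout the last stage, yields a fixed-endpoint homotopy between $\gamma$ and the connecting geodesic.

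The one technical point requiring care — and the main obstacle — is the endpoint at $s=c$: while $H(\lambda,0)\equiv p$ for all $\lambda$, the terminal point $H(\lambda,c)=E(\lambda\sigma(c))$ moves with $\lambda$, so the homotopy just described is a based homotopy, not a priori a \emph{fixed-endpoint} one. To fix this I would instead compare $\gamma$ with the geodesic $\eta(s):=\exp_p(s\,\sigma(c))$ by first homotoping $\gamma$ (rel endpoints) to $\gamma\circ\chi = E\circ\sigma$, then observing that $E\circ\sigma$ and $\eta$ are both images under $E$ of curves ($\sigma$ and the ray $s\mapsto s\sigma(c)$) in the star-shaped domain $\mathcal{D}$ sharing the same endpoints $0$ and $\sigma(c)$; any straight-line homotopy $\sigma_\lambda(s):=(1-\lambda)\sigma(s)+\lambda\cdot\tfrac{s}{c}\sigma(c)$ between these two curves in $\mathcal{D}$ keeps the endpoints $0$ and $\sigma(c)$ fixed (one must check $\sigma_\lambda(s)\in\mathcal{D}$, which holds after possibly reparametrizing so the curves lie in a suitable star-shaped neighborhood, or by a standard subdivision argument splitting $[0,c]$ into pieces each mapped into a domain where $E$ restricts nicely), and pushing it forward by $E$ gives the required fixed-endpoint homotopy between $E\circ\sigma$ and $\eta$. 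Composing with the reparametrization homotopy completes the argument. I would also remark that when $\gamma$ is merely continuous one first approximates it by a piecewise smooth curve in the same fixed-endpoint homotopy class, a routine smoothing step.
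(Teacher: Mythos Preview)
Your argument for surjectivity is correct and essentially identical to the paper's. For the homotopy statement you also reach the paper's core idea---homotope the quasi-lift $\sigma$ to the radial segment $s\mapsto (s/c)\,\sigma(c)$ inside $\mathcal{D}$ with endpoints fixed, then push forward by $E$---but your explicit straight-line homotopy $\sigma_\lambda(s)=(1-\lambda)\sigma(s)+\lambda\,(s/c)\sigma(c)$ need not stay in $\mathcal{D}$, since $\mathcal{D}$ is star-shaped about $0$ but not convex in general, and your proposed patches (``reparametrizing'' or ``subdivision'') do not obviously resolve this. The paper avoids the issue entirely: a star-shaped open set is contractible, hence simply connected, so \emph{any} two paths in $\mathcal{D}$ with common endpoints $0$ and $\sigma(c)$ are fixed-endpoint homotopic in $\mathcal{D}$; composing with $E$ and then with the (elementary) fixed-endpoint homotopy between $\gamma$ and its nondecreasing reparametrization $\gamma\circ\chi$ finishes the proof. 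Your first attempt via the radial homotopy $H(\lambda,s)=E(\lambda\sigma(s))$ and the final remark about smoothing continuous curves are unnecessary detours.
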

	
	\begin{proof}
		Fix $q\in M$ and let $\gamma:[0,1]\rightarrow M$ be any piecewise smooth 
		curve 
		with $\gamma(0)=p$, $\gamma(1)=q$. By Theorem \ref{maint} applied to $\exp_p:\mathcal{D}\subset T_p M\rightarrow M$, there exists a quasi-lift $\overline{\alpha}:[0,c]\rightarrow \mathcal{D}$ of $\gamma$ with $\overline{\alpha}(0)=0_{T_pM}$. Let $v:=\overline{\alpha}(c)\in \mathcal{D}$, and let $\chi:[0,c] \rightarrow [0,1]$ be a 
		continuous, nondecreasing, surjective function
		such that $\gamma\circ \chi = \exp_p\circ \overline{\alpha}$. Then, $\chi(c) =1$, and thus, $$\exp_p(v) = (\exp_p\circ \overline{\alpha})(c) = \gamma(\chi(c)) = \gamma(1)= q.$$ This proves the surjectivity of $\exp_p$. 
		
		For the last assertion, notice that $\mathcal{D}$ is star-shaped around $0_{T_pM}$, hence 1-connected. Thus, the curve $t\in [0,1]\mapsto \exp_p(t\,v)=\Phi_X(1,tv)=\Phi_X(t,v)\in M$ joins $\exp_p(0)=\exp_p\circ\overline{\alpha}(0)=\gamma(0)=p$ with $\exp_p(v)=\exp_p\circ\overline{\alpha}(c)=\gamma(1)=q$. Moreover, since $\mathcal{D}$ is $1$-connected, 
		the segment $t\in [0,1]\mapsto t\, v\in \mathcal{D}$ and
		the curve $\overline{\alpha}$ are endpoint-homotopically equivalent and so are the corresponding  compositions $\eta: t\in [0,1]\mapsto \exp_p(t\, v)\in M$ and
		$\exp_p\circ\overline{\alpha}=\gamma \circ \chi$ (the latter being a nondecreasing reparametrization of $\gamma$). 
			\end{proof}

\smallskip

A classical result by Morse \cite[Thm. 13.3, p. 239]{Morse}, later refined by a key contribution from Serre \cite{Se}, establishes that any two points in a complete, non-contractible Riemannian manifold can be joined by infinitely many geodesics. {Note that, in this case, the corresponding exponential map has the continuation property but is non-proper. The following direct consequence of our approach shows that, in the semi-Riemannian setting, these three properties (completeness, continuation property and non-properness) actually suffices to guarantee the existence of infinitely many connecting geodesics—thus yielding a significant generalization of the classical result.}

\begin{thm}\label{thr}
	Let $(M,g)$ be a semi-Riemannian manifold, and consider some $p\in M$ at which the exponential map $\exp_p:\mathcal{D}\subset T_p M\rightarrow M$ has the continuation property but is non-proper. Then there exist infinitely many geodesics connecting $p$ with any point of $M$ (including $p$ itself). In particular, there exist infinitely many geodesic loops based at $p$.
\end{thm}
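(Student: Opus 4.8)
The plan is to prove the sharper statement that $E^{-1}(q)$ is an infinite subset of $\mathcal{D}$ for \emph{every} $q\in M$. This suffices, because each $w\in E^{-1}(q)$ determines the geodesic $t\in[0,1]\mapsto E(tw)$ joining $p$ to $q$ (recall that $\mathcal{D}$ is star-shaped about $0_{T_pM}$, so $tw\in\mathcal{D}$), distinct $w$'s yielding distinct geodesics since their initial velocities $(dE)_0(w)=w$ differ; the case $q=p$ then produces the asserted geodesic loops based at $p$.

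I would fix once and for all an auxiliary complete Riemannian metric $h$ on $M$, fix an arbitrary $q_0\in M$, and argue by contradiction assuming $E^{-1}(q_0)=\{w_1,\ldots,w_N\}$ is finite (it is nonempty by the surjectivity part of Theorem \ref{ñh}). Non-properness of $E$ provides a compact set $C\subset M$ with $E^{-1}(C)$ noncompact; since $E^{-1}(C)$ is closed in the metrizable manifold $\mathcal{D}$, this yields a sequence $(v_k)\subset E^{-1}(C)$ no subsequence of which converges in $\mathcal{D}$. In particular no point of $\mathcal{D}$ is repeated infinitely often among the $v_k$, so (using finiteness of $E^{-1}(q_0)$) only finitely many $k$ have $E(v_k)=q_0$, and after discarding these we may assume $E(v_k)\neq q_0$ for all $k$. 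Setting $\mathcal{L}:=1+\sup_{x\in C}d_h(x,q_0)<\infty$, I would choose for each $k$ a regular piecewise smooth curve $\gamma_k:[0,1]\to M$ from $E(v_k)$ to $q_0$ with $\mathrm{length}_h(\gamma_k)<\mathcal{L}$ (e.g.\ a minimizing $h$-geodesic, available by Hopf--Rinow and regular since $E(v_k)\neq q_0$).

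The core step is then to apply Theorem \ref{maint} to $\gamma_k$: its initial point $E(v_k)$ lies in $\mathrm{Im}\,E$, and since $E$ has the continuation property there is a \emph{global} quasi-lift $\overline{\gamma}_k:[0,c_k]\to\mathcal{D}$ which --- by the construction underlying the theorem (the $\xi$-shifted-lifts of Proposition \ref{uff} and the concatenation in Corollary \ref{co}) --- may be taken to depart from $v_k$. With the associated continuous nondecreasing surjection $\chi_k:[0,c_k]\to[0,1]$ one has $E\circ\overline{\gamma}_k=\gamma_k\circ\chi_k$, so $E(\overline{\gamma}_k(c_k))=\gamma_k(1)=q_0$ and hence $\overline{\gamma}_k(c_k)\in\{w_1,\ldots,w_N\}$. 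By the pigeonhole principle, after passing to a subsequence I may assume $\overline{\gamma}_k(c_k)=w_{j_0}$ for a single fixed $j_0$. Reversing the parameter, $\widehat{\gamma}_k(s):=\overline{\gamma}_k(c_k-s)$ is a curve from $w_{j_0}$ to $v_k$ along which $E\circ\widehat{\gamma}_k$ is a monotone reparametrization of $\gamma_k$, so $\mathrm{length}_h(E\circ\widehat{\gamma}_k)=\mathrm{length}_h(\gamma_k)<\mathcal{L}$. Applying Proposition \ref{1.3}(iii) to $E$ with base point $w_{j_0}$ and bound $\mathcal{L}$ produces a \emph{compact} set $\mathcal{C}\subset\mathcal{D}$ containing the image of every such curve; in particular $v_k=\widehat{\gamma}_k(c_k)\in\mathcal{C}$ for all $k$, so $(v_k)$ has a subsequence converging in $\mathcal{C}\subset\mathcal{D}$ --- contradicting the choice of $(v_k)$. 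Hence $E^{-1}(q_0)$ is infinite, and since $q_0$ was arbitrary this completes the proof.

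I expect the only genuinely delicate point to be the invocation of Theorem \ref{maint} with the quasi-lift prescribed to depart from the specific preimage $v_k$, rather than from some unspecified point of $E^{-1}(\gamma_k(0))$: this is not stated explicitly in the theorem but is built into the construction of Section \ref{tres}, where the $\xi$-shifted-lifts are required to start at a chosen $v_0$ and the concatenation step preserves the starting point. Everything else --- that $h$-length is unchanged under the monotone reparametrization $\chi_k$, that $\gamma_k$ can be chosen regular with $h$-length uniformly bounded by $\mathcal{L}$ using the compactness of $C$ and completeness of $h$, and that noncompactness of the closed set $E^{-1}(C)$ really produces a sequence escaping every compact subset of $\mathcal{D}$ --- is routine.
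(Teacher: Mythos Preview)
Your proof is correct and follows essentially the same approach as the paper: both use non-properness to produce a sequence in $E^{-1}(K)$ escaping every compact of $\mathcal{D}$, quasi-lift curves from the image points to $q$ via Theorem \ref{maint} (starting at the prescribed preimages, which indeed is built into the construction of Section \ref{tres} as you note), and then derive a contradiction from Proposition \ref{1.3}(iii) once a common base point for the reversed lifts is available. The only cosmetic difference is that you assume $E^{-1}(q_0)$ finite and use pigeonhole to fix a common endpoint $w_{j_0}$, whereas the paper shows directly that the endpoints of the lifts have no convergent subsequence by extending with short segments to a putative limit $v$; both devices serve the same purpose of anchoring the reversed curves at a single point so that Proposition \ref{1.3}(iii) applies.
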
	

\noindent {\it Proof.} Let $q\in M$ arbitrary. Since $\exp_p$ is not proper, there exists a compact set $K\subset M$ such that the preimage $\exp_p^{-1}(K)\subset \mathcal{D}$ is non-compact. Hence, one can choose a sequence $\{u_n\}_{n\in\mathbb{N}}\subset \exp_p^{-1}(K)$ that is not contained in any compact subset of $\mathcal{D}$. 
For each $n$, let $\beta_n$ be a smooth curve in $M$ connecting $r_n:=\exp_p(u_n)\in K$ to $q$, 
with length$_h(\beta_n)<\mathcal{L}$ for some $\mathcal{L}>0$ and for some auxiliary complete Riemannian metric $h$ on $M$.
By Theorem \ref{maint}, each curve $\beta_n$ admits a quasi-lift $\overline{\beta}_n\subset \mathcal{D}$ starting at $u_n$ and ending at a point $v_n\in \exp_p^{-1}(q)$. Define the geodesic $\gamma_n(s):=\exp_p(s\cdot v_n)$, which connects $p$ to $\exp_p(v_n)=q$. To complete the argument, it suffices to show that the sequence $\{v_n\}_{n\in\mathbb{N}}\subset \mathcal{D}$ has no convergent subsequences. Assume, for contradiction, that a subsequence $\{v_{n_k}\}_{k\in\mathbb{N}}$ converges to some $v\in \mathcal{D}$. 
Since the family of quasi-lifts $\{\overline{\beta}_{n_k}\}$ is not contained in any compact subset of $\mathcal{D}$, by extending each curve with a short segment joining $v_{n_k}$ to $v$ we obtain a new family of curves whose compositions with $\exp_p$ still have length less than $\mathcal{L}+1$, thereby contradicting the continuation property of $\exp_p$ stated in Proposition~\ref{1.3}\,(iii). \qed

\medskip

\subsection{Riemannian results}

We give here a connectedness through \textit{minimizing} geo\-de\-sics result that is independent of Hopf-Rinow's classic arguments. 
\begin{thm}\label{tuss}
	Let $(M,g)$ be a connected Riemannian manifold. If the exponential map $\exp_p:\mathcal{D}\subset T_pM \rightarrow
	M$ has the continuation property for some (resp. any) $p\in M$, then there exists a {\em minimizing} geodesic connecting $p$ with any other $q\in M$ 
	(resp. connecting any pair of points in $M$).
\end{thm}

\noindent {\it Proof.} Assume that $\exp_p:\mathcal{D}\subset T_pM \rightarrow
M$ has the continuation property for some $p\in M$, and fix any other $q\in M$. Let $\{\gamma_n\}$ be a sequence of smooth curves $\gamma_n:[0,1]\rightarrow M$ joining $p$ with $q$ such that length$(\gamma_n)\rightarrow d(p,q)$. By Theorem \ref{ñh}, there exists a quasi-lift $\overline{\alpha}_n:[0,b_n]\rightarrow \mathcal{D}$ of $\gamma_n$ with $\overline{\alpha}_n(0)=0$ for each $n$. Clearly, $\exp_p\circ\overline{\alpha}_n(b_n)=q$ for all $n$. 
By the continuation property, the sequence $\{\overline{\alpha}_n(b_n)\}$ is contained in a compact set of $\mathcal{D}$ (Proposition \ref{1.3} (iii)). Let $v\in \mathcal{D}$ be a limit (up to a subsequence) of it. By continuity, $$\exp_p(v)=\exp_p\left(\lim_n\overline{\alpha}_n(b_n)\right)=\lim_n \exp_p(\overline{\alpha}_n(b_n))=q,$$ and thus, the geodesic $\gamma:[0,1]\rightarrow M$ given by $\gamma(t):=\exp_p(tv)$ satisfies $\gamma(0)=p$ and $\gamma(1)=q$. 
Moreover, by the Gauss Lemma, ${\rm length}(\gamma)\leq{\rm length}(\exp_p\circ\overline{\alpha}_n)$, hence $$(d(p,q)\leq){\rm length}(\gamma)\leq\lim_n{\rm length}(\exp_p\circ\overline{\alpha}_n)=\lim_n{\rm length}(\gamma_n)=d(p,q),$$
as required. \qed

\begin{rem}\label{rf}
	The exponential map $\exp_p:\mathcal{D}(=T_pM)\rightarrow M$ on a \textit{complete} Riemanninan manifold $(M,g)$ satisfies the continuation property  (see \cite[Prop. 2.6]{CF}). Thus, as pointed out before, the geodesic connectedness statement of the Hopf-Rinow Theorem can be seen as a particular consequence of Theorem \ref{tuss}.
\end{rem}

Let $(M,g)$ be a Riemannian manifold. Denote by $\overline{M}$ (resp. $\partial M$) the {\em Cauchy completion} (resp. {\em boundary}) associated to the metric space $(M,d)$, where $d=d_g$ is the distance function on $M$ associated with $g$. A curve $\sigma:[a,b)\rightarrow M$ joins $\sigma(a)=p\in M$ with $q\in \partial M$ if the extension $\overline{\sigma}:[a,b]\rightarrow \overline{M}$ of $\sigma$ defined by imposing that $\overline{\sigma}(b):=q$ is continuous in $\overline{M}$. The notion of a curve $\sigma:(a,b)\rightarrow M$ joining two points $p,q\in\partial M$ is defined analogously. With these definitions we can now establish the following extension of the previous result.
\begin{thm}\label{tus}
	Let $(M,g)$ be a Riemannian manifold. If the exponential map $\exp_p:\mathcal{D}\subset T_pM \rightarrow
	M$ has the continuation property for some (resp. any) $p\in M$, then there exists a {\em minimizing} geodesic connecting $p$ with any other $q\in \overline{M}$ (resp. connecting any pair of points in $\overline{M}$).
\end{thm}

\noindent {\it Proof.} Assume that the map $\exp_p:\mathcal{D}\subset T_pM \rightarrow
M$ has the continuation property for $p\in M$, and suppose that $q\in\partial M$ (the other case is similarly obtained). Let $\{\gamma_n\}$ be a sequence of smooth curves $\gamma_n:[0,1]\rightarrow M$ such that $\gamma_n(0)=p$, $\{\gamma_n(1)\}$ converges to $q$ in $\overline{M}$, and ${\rm length}(\gamma_n)\rightarrow d(p,q)$.
By Theorem \ref{ñh}, there exists a quasi-lift $\overline{\alpha}_n:[0,b_n]\rightarrow \mathcal{D}$ of $\gamma_n$ with $\overline{\alpha}_n(0)=0$ for each $n$.
Consider the curves $t\in [0,1]\mapsto \exp_p(t\overline{\alpha}_n(b_n))$. By the Gauss Lemma we have 
\begin{equation}\label{xu}
	{\rm length}(t\mapsto \exp_p(t\overline{\alpha}_n(b_n))\leq{\rm length}(\gamma_n).
\end{equation}
Taking into account that  $\{{\rm length}(\gamma_n)\}$ is bounded, we deduce that $\{\overline{\alpha}_n(b_n)\}\subset \mathcal{D}$ is contained in a compact set of $T_p M$. Let $v\in \overline{\mathcal{D}}$ be a limit  of it (up to a subsequence). By continuity, the geodesic $\gamma:[0,1)\rightarrow M$ given by $\gamma(t):=\exp_p(tv)$ satisfies $\gamma(0)=p$ and $\lim_{t\rightarrow 1}\gamma(t)=\lim_{t\rightarrow 1}\exp_p(tv)=q$. 
In conclusion, 
\[
(d(p,q)\leq){\rm length}(\gamma)=\lim_n {\rm length}(t\mapsto \exp_p(t\overline{\alpha}_n(b_n))\stackrel{(\ref{xu})}{\leq}
\lim_n{\rm length}(\gamma_n)=d(p,q),
\]
as required. \qed

\medskip

We ends with the following direct consequence of 
Theorem \ref{thr} and Remark \ref{rf}: 

\begin{cor}\label{AA}
	Let $(M,g)$ be a complete Riemannian manifold whose exponential map is non-proper at every point (which happens, for instance, when $M$ is compact or non-contractible). Then, there exist infinitely many geodesics connecting any point $p$ with any point of $M$ (including $p$ itself). In particular, there exist infinitely many geodesic loops based at each point of $M$.
\end{cor}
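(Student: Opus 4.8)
The plan is to read the statement off directly from Theorem~\ref{thr} and Remark~\ref{rf}, the only separate (and essentially routine) point being the two sufficient conditions for non-properness quoted in parentheses. First I would fix an arbitrary $p\in M$. Since $(M,g)$ is complete, Hopf--Rinow guarantees that $\exp_p$ is defined on all of $T_pM$, i.e.\ $\mathcal{D}=T_pM$; by Remark~\ref{rf} (that is, by \cite[Prop.~2.6]{CF}) the map $E=\exp_p$ therefore has the continuation property. Since by hypothesis $E$ is also non-proper at $p$, all the hypotheses of Theorem~\ref{thr} are in force at $p$, and that theorem produces infinitely many geodesics joining $p$ to any prescribed point of $M$; taking that point to be $p$ itself yields infinitely many geodesic loops based at $p$. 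As $p\in M$ was arbitrary, both assertions follow.

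It then remains to justify that the hypothesis ``$E$ non-proper at every point'' is indeed satisfied, for instance, when $M$ is compact or non-contractible. The compact case is immediate: for the compact set $K=M$ one has $E^{-1}(K)=T_pM$, which is noncompact (as $\dim M\geq 1$), so $E$ is not proper at any $p$. For the non-contractible case I would argue contrapositively, i.e.\ show that if $(M,g)$ is complete and $\exp_p$ is proper for some $p\in M$, then $M$ is contractible. The key step is that the exponential map behaves well under coverings: if $\pi:\widetilde{M}\to M$ is a covering and $\tilde p\in\pi^{-1}(p)$, then $\exp_p=\pi\circ\exp_{\tilde p}^{\widetilde M}$ (modulo the canonical isomorphism $d\pi_{\tilde p}:T_{\tilde p}\widetilde M\to T_pM$), and $\widetilde M$ is again complete, so $\exp_{\tilde p}^{\widetilde M}$ is onto by Hopf--Rinow. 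Properness of $\exp_p$ together with surjectivity of $\exp_{\tilde p}^{\widetilde M}$ then forces $\pi$ to be proper, hence $\pi_1(M)$ finite; passing to the universal cover and invoking a Cartan--Hadamard-type covering argument on the regular part of the exponential map (together with the fact that a nontrivial finite group cannot act freely on a contractible manifold) yields contractibility.

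I expect this last reduction --- in particular, obtaining contractibility across the conjugate locus of $\exp_p$ in full generality --- to be the only genuinely delicate point; it is classical folklore and, in any case, logically peripheral, since the whole substance of the corollary already lies in Theorems~\ref{maint} and~\ref{thr}. I would therefore keep that verification brief, and close by noting that the resulting statement recovers and strictly extends Serre's multiplicity theorem \cite{Se} in the Riemannian case, the mechanism being entirely the quasi-lifting principle of Theorem~\ref{maint} acting through the continuation property, with no appeal to Morse theory on loop spaces.
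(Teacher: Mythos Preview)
Your main deduction---completeness gives the continuation property (Remark~\ref{rf}), and then Theorem~\ref{thr} delivers infinitely many connecting geodesics at every $p$---matches the paper exactly: the corollary is presented there simply as a ``direct consequence of Theorem~\ref{thr} and Remark~\ref{rf}'', with no further argument.

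Your discussion of the parenthetical (``compact or non-contractible'') goes beyond the paper, which does not justify it at all. The compact case is handled correctly. For the non-contractible case, the reduction to finite $\pi_1(M)$ via properness of the universal covering is sound, but the residual step---that a complete simply connected Riemannian manifold with proper $\exp_{\tilde p}$ must be contractible---is not furnished by any ``Cartan--Hadamard-type covering argument on the regular part'': Cartan--Hadamard needs nonpositive curvature, and conjugate points are precisely the obstruction to extending the covering picture across the singular set. The route I am aware of to ``non-contractible $\Rightarrow$ $\exp_p$ non-proper'' in full generality passes through Morse theory on the path space (non-contractibility forces $H_\ast(\Omega M)$ nonzero in unbounded degrees, hence geodesics of unbounded Morse index, hence of unbounded length, hence $\exp_p^{-1}(q)$ unbounded). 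But that is exactly the variational machinery the corollary advertises itself as bypassing, so there is a latent circularity in the paper's own parenthetical remark. You are right to flag the point as peripheral to the corollary proper; I would, however, resist calling it folklore.
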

\medskip

\subsection{Lorentzian results} $\quad$ \newline

Certainly, the semi-Riemannian results presented at the beginning of this section can be adapted to the Lorentz case, providing a valuable contribution in this context. In this subsection, however, we will derive alternative results in terms of causal/timelike curves, due to their physical implications in the context of general relativity. 

For each $p\in M$ we denote by $\mathcal{T}_p \subset T_pM$ the set of timelike vectors at $p$. Recall that $\mathcal{T}_p$ is the disjoint union of two connected open convex cones called {\it timecones}. 

We denote by $\mathcal{C}_p$ the closure of $\mathcal{T}_p$ in $T_pM$. Note that $0_p \in \mathcal{C}_p$, and that $\mathcal{C}_p \setminus \{0_p\}$ coincides with the set of causal vectors in $T_pM$. Again, $\mathcal{C}_p\setminus \{0_p\}$ has two connected components called {\it causal cones}. A piecewise smooth curve $\sigma:[a,b]\rightarrow M$ is said to be {\it timelike} [resp. {\it causal}] if its tangent vector $\sigma '(t) \in \mathcal{T}_{\sigma(t)}$ [resp. $\in \mathcal{C}_{\sigma(t)}\setminus \{0_{\sigma(t)}\}$] for any $t\in [a,b]$ and both lateral tangent vectors at a break are on the same component of the timecone [resp. causal cone] thereat.

Let
\begin{equation}\label{causalBigC}
	C_p:= \mathcal{C}_p\cap \mathcal{D}.
\end{equation}
Following standard notation, we write
\begin{align}
	I(p) &=\{q\in M \, : \, \hbox{$\exists$ a piecewise smooth timelike segment connecting $p$ and $q$}\}, \notag \\
	J(p)&= \{q\in M \, : \, \hbox{$\exists$ a piecewise smooth causal segment connecting $p$ and $q$}\}\cup\{p\}.\notag
\end{align}
It is well-known that $I(p)$ is always open.

\begin{defi}[Causal continuation property]\label{def1}
	Let $p\in M$. We say that $\exp _p$ has the {\it causal continuation property} (CCP) if for any (piecewise smooth) causal curve $\gamma:[0,1] \rightarrow M$ with $\gamma(0)=p$, and for any continuous curve $\sigma:[0,b)\subset [0,1] \rightarrow C_p$ [for $C_p$ defined in (\ref{causalBigC})] such that $\sigma(0)=0_p$ and 
	\[\exp_p\circ \sigma = \gamma\mid_{[0,b)}\]
	there exists a sequence $(t_k)_{k \in \mathbb{N}}\subset [0,b)$ with $t_k\rightarrow b$ for which $\{\sigma(t_k)\}_{k\in\mathbb{N}}$ converges in $\mathcal{D}$ (and thus, in $C_p$). 
\end{defi}

According to Theorem \ref{maint}, if $\gamma:[0,1] \rightarrow M$ is a (piecewise smooth) causal curve which does not admit a (global) quasi-lift then there exists a curve $\overline{\alpha}$ in $C_p$ that is inextensible in $\mathcal{D}$, thus violating the CCP. Consequently:
\begin{cor}\label{maintt} 
	Let $(M,g)$ be a Lorentzian manifold, and assume that the exponential map $\exp_p$ has the CCP for some $p\in M$. Then,
	any (piecewise smooth) causal curve $\gamma:[0,1] \rightarrow M$ with $\gamma(0)=p$ admits a quasi-lift $\overline{\alpha}:[0,c]\rightarrow C_p$ starting at $0_p\in C_p$.
\end{cor}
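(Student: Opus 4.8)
The plan is to deduce this corollary directly from the Main Theorem (Theorem \ref{maint}) by showing that, under the CCP, the only alternative to a global quasi-lift — namely an inextensible (non-global) partial quasi-lift — cannot occur, \emph{provided one is careful that the intermediate curves live in the causal cone $C_p$}. First I would observe that since $\gamma$ is causal with $\gamma(0)=p=E(0_p)$, the Main Theorem gives a partial quasi-lift $\overline{\alpha}:[0,c]\to\mathcal{D}$ of $\gamma$ with $\overline{\alpha}(0)=0_p$, which is either global or inextensible in $\mathcal{D}$. Suppose, for contradiction, it is inextensible and not global: then there is a continuous nondecreasing surjection $\chi:[0,c]\to[0,d]$ with $d<1$ and $\gamma\circ\chi=E\circ\overline{\alpha}$, and $\overline{\alpha}$ leaves every compact subset of $\mathcal{D}$ as we approach $c$ (or, more precisely, the limit construction of Corollary \ref{co} exhausts $\mathcal{D}$ without ever reaching $d=1$).

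The key step is to check that $\overline{\alpha}$ can be taken with image in $C_p$, so that it furnishes precisely the kind of curve the CCP forbids. This requires tracing through the construction: the shifted-lifts $\overline{\alpha}_\xi$ are built (Proposition \ref{uff}) by concatenating geodesic segments of the pullback metrics $(E_{u_\lambda}^{\xi_\lambda})^\ast(h)$, with the defining property that $E_{\boldsymbol u(s)}^{\boldsymbol\xi(s)}\circ\overline{\alpha}_\xi$ is an $h$-geodesic reproducing pieces of $\gamma$; since $\gamma$ is causal and the perturbed maps $E_u^\xi$ converge to $E$ in $C^1$, the velocities $(dE)_{\overline{\alpha}_\xi(s)}(\dot{\overline{\alpha}}_\xi(s))$ stay arbitrarily close to the causal velocities of $\gamma$, and by the open/convex-cone structure of $\mathcal{T}_{\gamma(s)}$ together with $\overline{\alpha}_\xi(0)=0_p$ one concludes $\overline{\alpha}_\xi$ stays in (a neighborhood of) $C_p$; passing to the Ascoli–Arzelà limit preserves $\mathrm{Im}\,\overline{\alpha}\subset C_p$ because $C_p$ is closed in $\mathcal{D}$. (In fact, a cleaner route is to note that $E(\overline{\alpha}_\xi(s))=\alpha_\xi(s)$ is a curve close to the causal curve $\gamma$ and that $\overline{\alpha}_\xi(s)$ is obtained by lifting radially — the curve $t\mapsto E(t\,\overline{\alpha}_\xi(s))$ has causal initial velocity up to the small error $\xi$ — so each $\overline{\alpha}_\xi(s)$ lies in $C_p$ up to an error that vanishes as $\xi\to 0$; the limit curve $\overline{\alpha}$ then lies in the closed set $C_p$.)

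With $\mathrm{Im}\,\overline{\alpha}\subset C_p$ established, I would apply the CCP (Definition \ref{def1}): we have a causal curve $\gamma:[0,1]\to M$ with $\gamma(0)=p$ and the continuous curve $\sigma:=\overline{\alpha}\circ\zeta$ (after an Aztec-step reparametrization of $\chi$, as discussed after Definition \ref{uh}, so that $\sigma:[0,b')\to C_p$ genuinely satisfies $E\circ\sigma=\gamma\mid_{[0,b')}$ with $\sigma(0)=0_p$ and $b'=d<1$), whose image is not contained in any compact subset of $\mathcal{D}$. The CCP then forces a sequence $t_k\to b'$ with $\sigma(t_k)$ converging in $\mathcal{D}$, which would allow us to extend $\overline{\alpha}$ past its alleged inextensible endpoint — contradiction. (Equivalently, by the characterization in Proposition \ref{1.3}(iii) adapted to the causal setting, a curve whose $E$-image is a bounded-length causal curve must stay in a fixed compact set, again contradicting inextensibility.) Hence $d=1$ and $\overline{\alpha}$ is a global quasi-lift $[0,c]\to C_p$ starting at $0_p$.

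The main obstacle I anticipate is the bookkeeping in the key step: verifying rigorously that the approximating shifted-lifts $\overline{\alpha}_\xi$, and hence their limit, actually stay inside $C_p$ rather than merely near it. This is where one must exploit that $\gamma$ is causal \emph{and} that $C_p=\mathcal{C}_p\cap\mathcal{D}$ is a closed set containing $0_p$ which is radially convex in the sense that $v\in C_p$ implies $tv\in C_p$ for $t\in[0,1]$ — so that the radial geodesics $t\mapsto E(tv)$ used to define the connecting geodesics in applications remain causal. Everything else — invoking Theorem \ref{maint}, the reparametrization to an Aztec step function, and the contradiction with the CCP — is a transparent repackaging of results already in hand.
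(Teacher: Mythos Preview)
Your overall strategy is exactly the paper's: apply Theorem \ref{maint} to the causal curve $\gamma$ starting at $p=E(0_p)$, obtain a partial quasi-lift that is either global or inextensible in $\mathcal{D}$, observe that the lift lies in $C_p$, and note that an inextensible curve in $C_p$ projecting to (a reparametrization of) $\gamma$ violates the CCP. The paper's proof is a single sentence and is equally terse on the key point, simply asserting the lift lies in $C_p$.

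Where you diverge is in how you justify $\mathrm{Im}\,\overline{\alpha}\subset C_p$, and neither of your two arguments works as written. In the first, knowing that $(dE)_{\overline{\alpha}_\xi(s)}(\dot{\overline{\alpha}}_\xi(s))$ is close to a causal vector is a statement about the \emph{velocity of the projected curve} in $M$, not about the \emph{position} $\overline{\alpha}_\xi(s)$ in $T_pM$; it does not place $\overline{\alpha}_\xi(s)$ in the cone. Your ``cleaner route'' is circular: the initial velocity of $t\mapsto E(t\,\overline{\alpha}_\xi(s))$ is precisely $\overline{\alpha}_\xi(s)$, so asserting it is causal is asserting $\overline{\alpha}_\xi(s)\in C_p$, which is the conclusion sought.

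The argument the paper has in mind (made explicit only in the proof of the next result, Theorem \ref{thmclave3}, via \cite[Lemma 5.33]{O}) is a direct Gauss-lemma computation on the limit curve $\overline{\alpha}$ itself: since $E\circ\overline{\alpha}=\gamma\circ\chi$ is, wherever $\chi$ is increasing, a causal curve issuing from $p$, the Gauss lemma gives that $s\mapsto g_p(\overline{\alpha}(s),\overline{\alpha}(s))$ is nonincreasing; starting at $0_p$ this forces $\overline{\alpha}$ to remain in a single causal cone of $T_pM$, hence in $C_p$. This avoids any return to the shifted-lift approximants and is the natural replacement for your key step. Your handling of the final contradiction (passing to an Aztec-step reparametrization, or equivalently invoking a causal analogue of Proposition \ref{1.3}(iii)) is fine.
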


The following theorem aims at giving sufficient conditions to ensure the existence of a maximizing {\it causal} geodesic from $p \in M$ to $q\in J(p)$. 

\begin{thm}\label{thmclave3}
	Let $(M, g)$ be a Lorentzian manifold, and assume that $\exp _p$ has the CCP for some $p \in M$. If  there exists a causal curve $\gamma$ from $p$ to $q$,  
	then there exists a maximizing causal geodesic from $p$ to $q$.  
	In particular, if $p=q$ so that
	$\alpha$ is a timelike loop, then there exists a timelike geodesic loop $\gamma$ at $p$. 
\end{thm}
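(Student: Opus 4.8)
I would prove Theorem~\ref{thmclave3} by combining the causal quasi-lift supplied by Corollary~\ref{maintt} with the standard first-variation/maximality machinery of Lorentzian geometry, the key point being that the quasi-lift lands in the causal cone $C_p$ and produces a \emph{radial} causal geodesic $t\mapsto E(tv)$ from $p$ to $q$ whose length dominates that of the original curve. More precisely: given a causal curve $\gamma:[0,1]\to M$ from $p$ to $q$, apply Corollary~\ref{maintt} to obtain a quasi-lift $\overline{\alpha}:[0,c]\to C_p$ starting at $0_p$ with $E\circ\overline{\alpha}=\gamma\circ\chi$ for a continuous nondecreasing surjection $\chi:[0,c]\to[0,1]$. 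Set $v:=\overline{\alpha}(c)\in C_p\subset\mathcal{D}$; then $E(v)=\gamma(\chi(c))=\gamma(1)=q$, and since $C_p=\mathcal{C}_p\cap\mathcal{D}$ is star-shaped about $0_p$, the radial curve $\eta(t):=E(tv)=\Phi_{X_g}(t,v)$ is a geodesic joining $p$ to $q$. Because $v\in\mathcal{C}_p$, a direct computation shows $\dot\eta(0)=v$ is causal, hence $\eta$ is a causal geodesic (the causal character of a geodesic is constant). This already yields a causal geodesic from $p$ to $q$; the $p=q$ statement then follows by noting that if $\gamma$ is a timelike loop then $q=p\in I(p)$, so $v\neq 0_p$ and $v$ lies in (the closure of) a timecone — one argues that $v$ must in fact be timelike, since $E(v)=p\in I(p)$ forces $\eta$ to be timelike, giving a timelike geodesic loop.

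\textbf{Maximality.} To upgrade to a \emph{maximizing} causal geodesic, I would run the same argument with a length-maximizing sequence. Let $\ell:=\sup\{\mathrm{length}_g(\sigma):\sigma$ a causal curve from $p$ to $q\}\in[0,\infty]$, and pick causal curves $\gamma_n$ from $p$ to $q$ with $\mathrm{length}_g(\gamma_n)\to\ell$. For each $n$, Corollary~\ref{maintt} gives a quasi-lift $\overline{\alpha}_n:[0,c_n]\to C_p$ with endpoint $v_n:=\overline{\alpha}_n(c_n)\in E^{-1}(q)\cap C_p$, and the radial geodesic $\eta_n(t)=E(tv_n)$ is causal from $p$ to $q$ with, by the reverse Gauss lemma (the Lorentzian analogue: radial geodesics maximize within the normal neighbourhood, and more generally $\mathrm{length}_g(\eta_n)\ge\mathrm{length}_g(E\circ\overline{\alpha}_n)=\mathrm{length}_g(\gamma_n)$ up to reparametrization since $\chi_n$ is nondecreasing), $\mathrm{length}_g(\eta_n)\ge\mathrm{length}_g(\gamma_n)$. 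Hence $\mathrm{length}_g(\eta_n)\to\ell$ as well, and $\ell<\infty$ (each $\eta_n$ is a causal geodesic segment controlled by $|v_n|$). The crucial extraction step: I would use the CCP — via the characterization in Proposition~\ref{1.3}(iii) applied to $E$ on $\mathcal{D}$ — together with an auxiliary complete Riemannian metric $h$ on $M$ with $\mathrm{length}_h(\gamma_n)$ uniformly bounded (which can be arranged, e.g. by reparametrizing, since the $\gamma_n$ range between fixed endpoints and, after an $h$-arclength truncation argument as in the proof of Theorem~\ref{thr}, stay $h$-bounded) to conclude that $\{v_n\}$ lies in a compact subset of $\mathcal{D}$. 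Passing to a subsequence, $v_n\to v_\infty\in\overline{C_p}\cap\mathcal{D}=C_p$ (closedness of $\mathcal{C}_p$), so $E(v_\infty)=q$, the geodesic $\eta_\infty(t)=E(tv_\infty)$ is causal from $p$ to $q$, and by continuity of the geodesic flow and of the length functional on compactly-varying geodesic segments, $\mathrm{length}_g(\eta_\infty)=\lim_n\mathrm{length}_g(\eta_n)=\ell$. Thus $\eta_\infty$ is a maximizing causal geodesic.

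\textbf{The main obstacle.} The delicate point is the uniform $h$-length (equivalently, compactness) control needed to extract a convergent subsequence of endpoints $\{v_n\}$. Unlike the Riemannian case, a causal curve can have arbitrarily large $h$-length while having bounded $g$-length (causal curves can "wiggle"), so one cannot directly bound $\mathrm{length}_h(\gamma_n)$; instead I would work with the radial geodesics $\eta_n$ themselves, observing that a causal \emph{geodesic} has $h$-length controlled by $|v_n|_h$ and by the (finite, since $\ell<\infty$) $g$-length, via the interplay between the causal cone structure and $h$ on the compact set where the $\eta_n$ a priori live — but making "a priori live on a compact set" precise is exactly where the CCP enters, so there is a mild circularity to be broken carefully. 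I expect to break it by first showing $\ell<\infty$ using a local argument near $p$ (in a convex normal neighbourhood causal geodesics from $p$ have length bounded by the coordinate radius, and CCP prevents escape with bounded $g$-length), then feeding the resulting uniform bound on $|v_n|$ back through Proposition~\ref{1.3}(iii). A secondary technical point is verifying that the quasi-lift's reparametrizing function $\chi_n$ being merely nondecreasing (possibly constant on subintervals) does not lose length — this is fine because $E\circ\overline{\alpha}_n$ traverses exactly the image of $\gamma_n$ and the radial comparison geodesic only gains length, but it should be stated explicitly. Everything else is a routine application of the results already established in Sections~\ref{dos}–\ref{tres} together with classical Lorentzian first-variation facts.
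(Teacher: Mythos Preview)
Your strategy is the paper's: take a length-maximizing sequence of causal curves $\gamma_n$ from $p$ to $q$, quasi-lift each one via Corollary~\ref{maintt} to obtain endpoints $v_n=\overline{\alpha}_n(c_n)\in C_p$ with $E(v_n)=q$, extract a convergent subsequence $v_n\to v$, and use the Lorentzian Gauss lemma to conclude that the radial geodesic $t\mapsto E(tv)$ realizes $d(p,q)$. Two points of comparison are worth recording.

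First, the paper disposes of the boundary case $q\in J(p)\setminus I(p)$ in one line: any causal curve from $p$ to such a $q$ is already a null pregeodesic by \cite[Prop.~10.46]{O}. It then restricts to $q\in I(p)$ and chooses the $\gamma_n$ \emph{timelike}, which (via \cite[Lemma~5.33]{O}) forces the quasi-lifts to remain in a single timecone of $\mathcal T_p$ --- a cleaner set-up than handling the general causal case at once.

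Second, and this is where you diverge, your ``main obstacle'' is a self-imposed detour. You try to extract compactness of $\{v_n\}$ through Proposition~\ref{1.3}(iii), which requires a uniform $h$-length bound on the $\gamma_n$, and then (correctly) observe that causal curves do not come with such a bound, leading you into a circular workaround. The paper does not go through Proposition~\ref{1.3} at all here: it simply writes ``by the CCP, the sequence $\{\overline{\alpha}_n(c_n)\}$ is contained in a compact set of $C_p$''. The point is that the CCP is \emph{by construction} the causal analogue of the continuation property, restricted to lifts in $C_p$ of causal curves starting at $p$; the equivalence (i)$\Leftrightarrow$(ii) of Proposition~\ref{1.3} adapts verbatim to this causal setting and yields a causal weak-properness statement with no auxiliary Riemannian length appearing anywhere. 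Once you have compactness, the finiteness of $\ell=d(p,q)$ is a consequence rather than a prerequisite, since $\mathrm{length}_g\bigl(t\mapsto E(tv_n)\bigr)=\sqrt{-g(v_n,v_n)}$ is bounded on compact subsets of $C_p$. So drop the $h$-length route, invoke the CCP directly for compactness, and the remainder of your argument (Gauss lemma comparison, limit, loop case) matches the paper's.
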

\begin{proof} 
	Suppose a causal curve $\gamma$ exists connecting $p$ to $q$. If $q \in J(p) \setminus I(p)$ then $\gamma$ itself can be reparametrized as a {\it null} geodesic segment connecting $p$ with $q$ (cf. \cite[Prop. 10.46]{O}). So, we will focus on the case $q \in I(p)$.
	
	From Corollary \ref{maintt}, given a sequence of timelike curves $\{\gamma_n:[0,1]\rightarrow M\}$ joining $p$ with $q$ such that length$(\gamma_n)\rightarrow d(p,q)$, there exist quasi-lifts $\overline{\alpha}_n:[0,c_n] \rightarrow C_p$ of $\gamma_n$ with $\overline{\alpha}_n(0)=0_p$ for each $n$.  
	The key observation here is that since $\exp_p \circ \overline{\alpha}_n$ is timelike by contruction, by \cite[Lemma 5.33]{O}, we have $\overline{\alpha}_n\subset \mathcal{T}_p$ (and indeed $\overline{\alpha}_n$ stays within a single timecone). Moreover, $\exp_p(\overline{\alpha}_n(c_n)) = q$ for all $n$. By the CCP, the sequence $\{\overline{\alpha}_n(c_n)\}$ is contained in a compact set of $C_p$. Let $v\in C_p$ ve a limit (up to a subsequence) of it. By continuity, $$\exp_p(v)=\exp_p\left(\lim_n\overline{\alpha}_n(c_n)\right)=\lim_n\exp_p(\overline{\alpha}_n(c_n))=q,$$ and thus, the geodesic $\gamma:[0,1]\rightarrow M$ given by $\gamma(t):=\exp_p(tv)$ satisfies $\gamma(0)=p$ and $\gamma(1)=q$. 
	Moreover, by Gauss Lemma, ${\rm length}(\gamma_n)\geq{\rm length}(\exp_p\circ\overline{\alpha}_n)$, hence $$(d(p,q)\geq){\rm length}(\gamma)\geq\lim_n{\rm length}(\exp_p\circ\overline{\alpha}_n)=\lim_n{\rm length}(\gamma_n)=d(p,q),$$
	as required.
\end{proof}
{\it The hypothesis of causal continuation cannot be removed in Theorem \ref{thmclave3}}. To see this, just consider the flat Lorentzian manifold $(M:=\mathbb{R}^2\setminus \{(1,0)\},-dt^2+dx^2)$, $p=(0,0), q=(2,0)$. Then $q\in I(p)$, but there is no timelike geodesic connecting them. Indeed, 
\[\mathcal{D} \equiv \mathbb{R}^2 \setminus \{(t,0)\, : \, t\geq 1\}.\]
Given any timelike curve $\sigma:[0,1] \rightarrow M$ from $p$ to $q$, its portion $\sigma\mid_{[0,1)}$ admits a lift to $C_p$ through $\exp _p$, but it cannot be extended in $\mathcal{D}$.

\medskip

It is well-known (cf., e.g., \cite[Prop. 7.36]{BEE}) that if a spacetime
$(M,g)$ is globally hyperbolic, then it is causally pseudoconvex\footnote{For any compact set $K\subset M$ there exists a compact set
	$K^*\subset M$ such that any segment of a causal geodesic with endpoints in $K$ is
	entirely contained in $K^*$.} and disprisoning\footnote{For any given maximal extension $\gamma:(a,b)\rightarrow M$ of a causal geodesic
	($-\infty \leq a < b \leq \infty$), and any $t_0\in (a,b)$, neither $\gamma[t_0, b)$ nor $\gamma(a, t_0]$
	is compact.}. On the other hand, if $(M,g)$ is
causally pseudoconvex and disprisoning, then $\exp_p\mid_{C_p}$ is a proper map (that is, inverse images of compact sets are compact) for every $p\in M$ (see \cite[Corollary 3.6]{CFH}), and consequently, it has the CCP. In conclusion:
\begin{prop}\label{p2} If $(M,g)$ is a globally hyperbolic spacetime then $\exp_p$ has the CCP for any $p\in M$.
\end{prop}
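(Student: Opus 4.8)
The plan is to verify Proposition~\ref{p2} by chaining together the standard facts cited just before its statement with the results established in this paper, so the ``proof'' is essentially a short assembly of known implications.

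First I would recall the three ingredients.  Global hyperbolicity of $(M,g)$ implies, by \cite[Prop. 7.36]{BEE}, that the spacetime is both causally pseudoconvex (for every compact $K\subset M$ there is a compact $K^\ast\supset K$ containing every causal geodesic segment with endpoints in $K$) and disprisoning.  Next, by \cite[Corollary 3.6]{CFH}, the conjunction of causal pseudoconvexity and disprisoning forces $E\mid_{C_p}$ to be a proper map for every $p\in M$.  Finally, properness of a continuous map is a much stronger condition than the continuation property: indeed, following Proposition~\ref{1.3}, any proper map is weakly proper, hence has the continuation property, and the same reasoning localizes to the causal cone, giving the CCP of Definition~\ref{def1}.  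Explicitly, if $\sigma:[0,b)\to C_p$ is continuous with $E\circ\sigma=\gamma\mid_{[0,b)}$ for a causal curve $\gamma$, then $\gamma$ extends right-continuously to $b$, so $\gamma([0,b])$ is compact; since $E\mid_{C_p}$ is proper, $(E\mid_{C_p})^{-1}(\gamma([0,b]))$ is a compact subset of $C_p$ containing the whole image of $\sigma$, and any sequence $t_k\nearrow b$ then yields a subsequence along which $\sigma(t_k)$ converges in $C_p\subset\mathcal{D}$, which is precisely the CCP.

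Thus the only genuine content is the bookkeeping of which cited statement supplies which implication, and checking that the passage from ``proper on $C_p$'' to ``CCP'' is literally the restriction of the argument in Proposition~\ref{1.3}(i)$\Leftrightarrow$(ii) to causal curves and to the closed subset $C_p$ of $\mathcal{D}$.  This last point I expect to be the only mild obstacle — one must be slightly careful that $C_p$ is closed in $\mathcal{D}$ (it is, being $\mathcal{C}_p\cap\mathcal{D}$ with $\mathcal{C}_p$ closed in $T_pM$) so that ``converges in $\mathcal{D}$'' automatically upgrades to ``converges in $C_p$'', exactly as the parenthetical remark in Definition~\ref{def1} asserts.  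Assembling these observations gives the proposition with no further work; I would write it as a two-sentence proof citing \cite[Prop. 7.36]{BEE}, \cite[Corollary 3.6]{CFH}, and the properness$\Rightarrow$continuation implication already recorded in the discussion around Proposition~\ref{1.3}.
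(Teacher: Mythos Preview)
Your proposal is correct and follows exactly the same route as the paper: the paragraph preceding Proposition~\ref{p2} already chains the implications global hyperbolicity $\Rightarrow$ causally pseudoconvex and disprisoning (via \cite[Prop.~7.36]{BEE}) $\Rightarrow$ $E\mid_{C_p}$ proper (via \cite[Corollary~3.6]{CFH}) $\Rightarrow$ CCP, and the proposition is stated as the summary of that discussion. Your write-up is in fact more explicit than the paper's, since you spell out the proper $\Rightarrow$ CCP step and the closedness of $C_p$ in $\mathcal{D}$, whereas the paper simply asserts ``and consequently, it has the CCP.''
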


\medskip

\noindent As a consequence of Theorem \ref{thmclave3} and Proposition \ref{p2}, we obtain another proof of the following well-known classic result:
\begin{cor} (Avez-Seifert).
	Let $(M,g)$ be a globally hyperbolic spacetime. If $p< q$ then there exists a future-directed maximizing causal geodesic connecting $p$ with $q$.
\end{cor}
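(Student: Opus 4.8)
The plan is to obtain this statement as an immediate consequence of Theorem \ref{thmclave3} and Proposition \ref{p2}, so no essentially new argument is needed. First I would apply Proposition \ref{p2}: since $(M,g)$ is globally hyperbolic, $\exp_p$ enjoys the causal continuation property for every $p\in M$, in particular for the prescribed point $p$. Next, recall that the relation $p<q$ entails the existence of a nontrivial future-directed piecewise smooth causal curve $\gamma$ joining $p$ to $q$. Hence the two hypotheses of Theorem \ref{thmclave3} --- the CCP at $p$ and the existence of a causal curve from $p$ to $q$ --- are both satisfied.

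Applying Theorem \ref{thmclave3} then produces a maximizing causal geodesic $\sigma$ from $p$ to $q$, and it only remains to verify that $\sigma$ can be chosen future-directed. If $q\in J(p)\setminus I(p)$, the geodesic supplied by the proof of Theorem \ref{thmclave3} is merely a reparametrization of $\gamma$ as a null geodesic segment (via \cite[Prop.\ 10.46]{O}), and therefore future-directed. If instead $q\in I(p)$, the geodesic has the form $\sigma(t)=E(tv)$, where $v\in\mathcal{C}_p$ arises as a limit (along a subsequence) of the endpoints $\overline{\alpha}_n(c_n)$ of quasi-lifts of a maximizing sequence $\{\gamma_n\}$ of future-directed timelike curves from $p$ to $q$; by \cite[Lemma 5.33]{O} each $\overline{\alpha}_n$ remains inside a single future timecone of $T_pM$, and since the closed future causal cone in $T_pM$ is a closed set, the limit $v$ belongs to it. Thus $\sigma$ is future-directed, and being a geodesic from $p$ to $q\neq p$ it is nonconstant; this is the required curve.

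I do not anticipate any genuine obstacle: all the real content lies in the two cited results. Proposition \ref{p2} rests on the facts that global hyperbolicity implies causal pseudoconvexity and the disprisoning property, whence $E|_{C_p}$ is proper by \cite[Corollary 3.6]{CFH} and a fortiori has the CCP; Theorem \ref{thmclave3} combines the causal quasi-lifting of Corollary \ref{maintt} with the Gauss Lemma and a limit-curve argument in $C_p$. The present corollary is thus purely a matter of assembling these pieces and of checking that time-orientation survives the limiting process.
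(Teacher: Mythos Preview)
Your proposal is correct and follows exactly the approach the paper intends: the corollary is stated there without proof, as an immediate consequence of Theorem \ref{thmclave3} and Proposition \ref{p2}. Your additional check that the resulting geodesic is future-directed (via the closedness of the future causal cone and \cite[Lemma 5.33]{O}) is a reasonable clarification that the paper leaves implicit.
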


\section{Beyond the exponential map: abstract lifting frameworks}\label{abstractlifting}

The theory developed in this work—centered on quasi-lifting of curves via the exponential map—extends well beyond the classic semi-Riemannian setting. Indeed, the core lifting phenomenon, as well as the path-continuation principle and compactness arguments near the singular strata, rely only on broad aspects of the local structure of the geodesic flow-induced map on the tangent bundle, together with certain topological properties of its domain. 

More precisely, the entire framework applies with almost no change to any map 
\[
\exp_X : \mathcal{D}_X \subset TM \to M
\]
associated with a smooth vector field \( X \in \mathfrak{X}(TM) \), as long as the following 
mild condition is satisfied.

\noindent (*) {\em Star-shaped fibers}: for each \( p \in M \), the domain \( \mathcal{D}_p := \mathcal{D}_X \cap T_pM \) is star-shaped with respect to the origin.

A natural general condition that is shared by the geodesic flow and which can ensure (*) is that the vector field $X:TM \rightarrow TTM$ satisfies the condition 
$$d\pi_v(X_v) = v \quad \forall v \in TM,$$
where $\pi:TM \rightarrow M$ is the standard projection. In that case, let $\Phi_X: \mathcal{O}_X\subset \mathbb{R}\times TM \rightarrow TM$ be the global flow of $X$, and consider the open set
\[
\mathcal{D}_X := \{v \in TM \, : \, (1,v) \in \mathcal{O}_X\}.
\]
We can then define
\[
\exp_{X} : \mathcal{D}_{X} \subset TM \to M, \quad \exp_{X}(v) := \pi \circ \Phi_{X}(1, v), 
\]
in complete analogy with the exponential map arising from a semi-Riemannian geodesic spray. This construction provides a natural generalization of the classical exponential map: indeed, when 
\(X = X_g\) is the geodesic spray associated with a semi-Riemannian metric \(g\), the map 
\(\exp_{X_g}\) coincides with the usual exponential map of \((M,g)\).

The curves of the form
$$\gamma_v(t) := \pi\circ \Phi_X(t,v) = \exp_X(t\cdot v)$$
on $M$ for $v\in D_X$ play the role of geodesics, and indeed  can easily be seen to satisfy a system of semi-linear second-order equations 
$$\frac{d^2(x^i\circ \gamma_v)}{dt^2} = V^i_j\left(x^j\circ \gamma_v(t),\frac{d(x^j\circ \gamma_v)}{dt}\right), \quad i=1,\ldots, n$$
in local coordinates $(x^1,\ldots, x^n)$ on $M^n$. 

Condition (*) then follows from the local solvability and uniqueness of solutions to second-order ODEs, which ensure that the flow domain \( \mathcal{D}_p \) around the zero vector in \( T_pM \) is open and star-shaped. 

It is worth emphasizing that one of the key tools traditionally used to study the local and global geometry of geodesic flows---namely, Jacobi fields---also admits a natural extension to this more general setting. Given a second-order system as above, one can define a corresponding variational equation along any solution curve, governing the behavior of infinitesimal variations through nearby trajectories. These generalized Jacobi fields arise as solutions to the linearization of the second-order flow and retain much of the structural information familiar from the classical theory: in particular, they allow the identification of conjugate points, describe local rigidity phenomena, and play a central role in understanding the stratified behavior of the domain of the flow map. Therefore, they can be a powerful analytical and geometric tool in the abstract lifting framework developed here.

These observations indicate that the techniques introduced here form the basis for a general geometric theory of curve lifting via flow-induced maps, independently of any particular metric structure. It reinforces the central idea of the paper, that path-lifting formulated in terms of quasi-lifts together with a suitable flow structure, is a general topological mechanism that transcends the specific geometry of the exponential map. We believe that further development of this framework could yield new insights into geodesic dynamics, generalized connection theories, and topological control problems in singular geometries, ultimately laying the groundwork for a unified approach to lifting phenomena in diverse geometric contexts.

\section*{Acknowledgements}
The authors are partially supported by the projects PID2020-118452GBI00 and PID2024-156031NB-I00. JLF is also partially supported by the IMAG-María de Maeztu grant CEX2020-001105-M (funded by MCIN/AEI/10.13039\-/50110001103).

\end{document}